    \newcommand{\Isom}{{\mathrm{Isom}}}
\newcommand\abs[1]{\left\lvert #1 \right\rvert}
\def\-{^{-1}}
\def\-{^{-1}}
\newcommand{\delete}[1]{}
     \newcommand{\SL}{{\mathrm{SL}}}
     \newcommand{\SO}{{\mathrm{SO}}}
    \newcommand{\st}{{\mathrm{st}}}
    \theoremstyle{plain}
\newtheorem*{main}{Main Theorem}
    \newtheorem{thm}{Theorem}[section] \newtheorem{cor}[thm]{Corollary}
    \newtheorem{lem}[thm]{Lemma}  \newtheorem{prop}[thm]{Proposition}
     \newtheorem{defn}[thm]{Definition}
    \newtheorem {claim}[thm]{Claim}
    \numberwithin{equation}{section}
\theoremstyle{remark}
\newtheorem*{remark}{Remark}
\begin{document}

\title{Barycentric straightening and bounded cohomology}

\author[Jean-Fran\c{c}ois Lafont]{Jean-Fran\c{c}ois Lafont$^\dagger$}
\address{Department of Mathematics,
The Ohio State University, 
231 W. 18th Ave.,
Columbus, OH 43210, U.S.A.}
\email{jlafont@math.ohio-state.edu}

\author[Shi Wang]{Shi Wang}
\address{Department of Mathematics,
Indiana University,
831 E. Third St.,
Bloomington, IN 47405, U.S.A.}
\email{wang679@iu.edu}

\thanks{$^\dagger$ The work of the first author is partially supported by the NSF, under grants DMS-1510640, DMS-1812028.}

\keywords{Barycenter method, bounded cohomology, semisimple Lie group, Dupont's problem.}
\subjclass{57T10 (primary), 53C35 (secondary)}

\begin{abstract}
We study the barycentric straightening of simplices in higher rank irreducible symmetric
spaces of non-compact type. We show that, for an $n$-dimensional symmetric space of rank $r\geq 2$
(excluding $SL(3,\mathbb R)/SO(3)$ and $SL(4, \mathbb R)/SO(4)$), the $p$-Jacobian has uniformly
bounded norm, provided $p\geq n-r+2$. As a consequence, for the corresponding
non-compact, connected, semisimple real Lie group $G$, in degrees $p\geq n-r+2$, every degree $p$ cohomology class
has a bounded representative. This answers Dupont's problem in small codimension. We also give examples of
symmetric spaces where the barycentrically straightened simplices of dimension $n-r$ have unbounded volume, showing
that the range in which we obtain boundedness of the $p$-Jacobian is very close to optimal.
\end{abstract}

\maketitle


\section{Introduction}

When studying the bounded cohomology of groups, an important theme is the comparison map from bounded cohomology
to ordinary cohomology. In the context of non-compact, connected, semisimple Lie groups, Dupont raised the question
of whether this comparison map is always surjective \cite{Du2} (see also Monod's ICM address \cite[Problem A']{Mo},
and \cite[Conjecture 18.1]{BIMW}). Properties of these Lie groups $G$ are closely related to properties of the
corresponding non-positively curved symmetric space $X=G/K$. Geometric methods on the space $X$ can often be
used to recover information about the Lie group $G$. This philosophy was used by Lafont and Schmidt \cite{LS} to
show that the comparison map is surjective in degree $\dim(X)$. In the present paper, we extend this result to smaller
degrees, and show:

\begin{main}
Let $X=G/K$ be an $n$-dimensional irreducible symmetric space of non-compact type of rank $r=rank(X)\geq 2$, excluding $SL(3,\mathbb R)/SO(3)$ and $SL(4, \mathbb R)/SO(4)$, and
$\Gamma$ a cocompact torsion-free lattice in $G$. Then the comparison maps
$\eta:H^{*}_{c,b}(G,\mathbb{R})\rightarrow H^{*}_c(G,\mathbb{R})$ and $\eta':H^{*}_{b}(\Gamma,\mathbb{R})\rightarrow
H^{*}(\Gamma,\mathbb{R})$ are both surjective in all degrees $* \geq n-r+2$.
\end{main}

The idea of the proof is similar to that in \cite{LS}. One defines a barycentric straightening of simplices in $X$, and
uses it to construct bounded cocycles representing any given cohomology class. These cocycles are obtained by integrating
a suitable differential form on various straightened simplices. Since the differential form has bounded norm, the key step
is to show that the Jacobian of the straightened simplex is uniformly controlled (independent of the simplex or the point
in it). Showing this later property requires some work, and is done in Sections \ref{sec:Jacobian} and
\ref{sec:combinatorial-problem} (following the general approach of Connell and Farb \cite{CF1}, \cite{CF2}).
The proof of the {\bf  Main Theorem} is then given in Section \ref{sec:surjectivity}.


\begin{remark}
For the various families of higher rank symmetric spaces, the dimension grows roughly quadratically in the rank. Our {\bf Main
Theorem} thus answers Dupont's question for continuous cohomology classes in degree close to the dimension of
the symmetric space.
Prior results on this problem include some work on the degree two case (Domic and Toledo \cite{DT}, as well as
Clerk and Orsted \cite{CO}) as well as the top-degree case (Lafont and Schmidt \cite{LS}).
In his seminal paper on the subject, Gromov
showed that characteristic classes of flat bundles are bounded classes \cite{Gr}. Using Gromov's result,
Hartnick and Ott \cite{HO} were able to obtain complete answers for several specific classes of Lie groups (e.g. of Hermitian type, as well as some other cases).

The recently posted preprint \cite{KK} of Inkang Kim and Sungwoon Kim uses similar methods to obtain
uniform control of the Jacobian in codimension one.
Their paper also contains a wealth of other applications, which we have not pursued in the present paper. On the
other hand, their results do not produce any new bounded cohomology classes (since in the higher rank case, the
codimension one continuous cohomology always vanishes).
\end{remark}

\vskip 5pt

\centerline{\bf Acknowledgments}

\vskip 5pt

We would like to thank Michelle Bucher-Karlsson, Marc Burger, Chris Connell, Tobias Hartnick, Clara L\"oh, Ben McReynolds, and Roman Sauer for their helpful comments.


\section{Preliminaries}\label{sec:preliminaries}
\subsection{Symmetric spaces of non-compact type}
In this section, we give a quick review of some results on symmetric spaces of non-compact type; for more details,
we refer the reader to  Eberlein's book \cite{Eb}. Let $X=G/K$ be a symmetric space of non-compact type,
where $G$ is semisimple and $K$ is a maximal compact subgroup of $G$. Geometrically $G$ can be identified with
$\text{Isom}_0(X)$, the connected component of the isometry group of $X$ that contains the identity, and
$K=\text{Stab}_p(G)$ for
some $p\in X$. Fixing a basepoint $p\in X$, we have a Cartan decomposition $\mathfrak{g}=\mathfrak{k}+\mathfrak{p}$
of the Lie algebra $\mathfrak{g}$ of $G$, where $\mathfrak{k}$ is the Lie algebra of $K$, and $\mathfrak{p}$ can be
isometrically identified with $T_pX$ using the Killing form. Let $\mathfrak{a}\subseteq \mathfrak{p}$ be a maximal
abelian subalgebra of $\mathfrak{p}$. We can identify $\mathfrak{a}$ with the tangent space of a flat $\mathcal{F}$
at $p$ -- that is to say, an isometrically embedded Euclidean space $\mathbb{R}^r\subseteq X$, where $r$ is the
rank of $X$. Given any vector $v\in T_pX$, there exists a flat $\mathcal{F}$ that is tangent to $v$. We say $v$ is
regular if such a flat is unique, and singular otherwise.

Now let $v\in \mathfrak{p}$ be a regular vector. This direction defines a point $v(\infty)$ on the visual boundary
$\partial X$ of $X$. $G$ acts on the visual boundary $\partial X$. The orbit set $Gv(\infty)=\partial_FX\subseteq \partial X$
is called a Furstenberg boundary of $X$. Since both $G$ and $K$ act transitively on $\partial_FX$, $\partial_FX$ is
compact. In fact, a point stabilizer for the $G$-action on $\partial _FX$ is a minimal parabolic subgroup $P$, so we can
also identify $\partial_FX$ with the quotient $G/P$. In the rest of this paper, we will use a specific realization
of the Furstenberg boundary -- the one given by choosing the regular vector $v$ to point towards a barycenter of a
Weyl chamber in the flat.

For each element $\alpha$ in the dual space $\mathfrak{a}^*$ of $\mathfrak{a}$, we define
$\mathfrak{g}_\alpha=\{Y\in \mathfrak{g}\;|\;[A,Y]=\alpha(A)Y \;\textrm{for all}\; A\in \mathfrak{a}\}$.
We call $\alpha$ a root if $\mathfrak{g}_\alpha$ is nontrivial, and in such case we call $\mathfrak{g}_\alpha$
the root space of $\alpha$. We denote the finite set of roots $\Lambda$, and we have the following root space
decomposition
$$\mathfrak{g}=\mathfrak{g}_0\oplus \bigoplus_{\alpha\in\Lambda}\mathfrak{g}_\alpha$$
where $\mathfrak{g}_0=\{Y\in \mathfrak{g}\;|\;[A,Y]=0\;\textrm{for all}\; A\in \mathfrak{a}\}$, and the
direct sum is orthogonal with respect to the canonical inner product on $\mathfrak{g}$.

Let $\theta$ be the Cartan involution at the point $p$. Then $\theta$ is an involution
on $\mathfrak{g}$, which acts by $I$ on $\mathfrak{k}$ and $-I$ on $\mathfrak{p}$,
hence it preserves Lie bracket. We can define $\mathfrak{k}_\alpha=(I+\theta)\mathfrak{g}_\alpha\subseteq \mathfrak{k}$,
and $\mathfrak{p}_\alpha=(I-\theta)\mathfrak{g}_\alpha\subseteq \mathfrak{p}$, with the following properties:

\begin{prop}\cite[Proposition 2.14.2]{Eb}\label{prop:Eberlein}
$(1)\;I+\theta:\mathfrak{g}_\alpha\rightarrow \mathfrak{k}_\alpha$ and $I-\theta:\mathfrak{g}_\alpha\rightarrow \mathfrak{p}_\alpha$ are linear isomorphisms. Hence $\dim(\mathfrak{k}_\alpha)=\dim(\mathfrak{g}_\alpha)=\dim(\mathfrak{p}_\alpha)$.\\
$(2)\;\mathfrak{k}_\alpha=\mathfrak{k}_{-\alpha}$ and $\mathfrak{p}_\alpha=\mathfrak{p}_{-\alpha}$ for all $\alpha\in \Lambda$, and $\mathfrak{k}_\alpha\oplus\mathfrak{p}_\alpha=\mathfrak{g}_\alpha\oplus \mathfrak{g}_{-\alpha}$.\\
$(3)\;\mathfrak{k}=\mathfrak{k}_0\oplus \bigoplus_{\alpha\in \Lambda^+}\mathfrak{k}_\alpha$ and $\mathfrak{p}=\mathfrak{a}\oplus \bigoplus_{\alpha\in \Lambda^+}\mathfrak{p}_\alpha$, where $\mathfrak{k}_0=\mathfrak{g}_0\cap \mathfrak{k}$,
and  $\Lambda^+$ is the set of positive roots.
\end{prop}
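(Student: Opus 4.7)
The plan is to reduce everything to a single observation: the Cartan involution $\theta$ is a Lie algebra automorphism that acts as $-I$ on $\mathfrak{a}$, so it swaps root spaces via $\theta(\mathfrak{g}_\alpha) = \mathfrak{g}_{-\alpha}$. Indeed, for $A \in \mathfrak{a}$ and $Y \in \mathfrak{g}_\alpha$ the identity $\theta[A,Y] = [\theta A, \theta Y] = -[A,\theta Y]$ yields $[A,\theta Y] = -\alpha(A)\theta Y$. With this in hand, all three parts become bookkeeping on the Cartan decomposition $\mathfrak{g} = \mathfrak{k} \oplus \mathfrak{p}$.

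For (1), the element $(I+\theta)Y$ is manifestly $\theta$-fixed, hence in $\mathfrak{k}$, so $I+\theta$ is well-defined as a map $\mathfrak{g}_\alpha \to \mathfrak{k}_\alpha$, and it surjects by the very definition of $\mathfrak{k}_\alpha$. Injectivity is immediate: if $Y + \theta Y = 0$ with $Y \in \mathfrak{g}_\alpha$, then $Y = -\theta Y \in \mathfrak{g}_\alpha \cap \mathfrak{g}_{-\alpha} = \{0\}$ by the directness of the root space decomposition. The argument for $I-\theta$ is identical, and the equality of dimensions follows at once.

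For (2), the bijection $Y \leftrightarrow \theta Y$ identifies $\{Y+\theta Y : Y \in \mathfrak{g}_\alpha\}$ with $\{Z+\theta Z : Z \in \mathfrak{g}_{-\alpha}\}$, giving $\mathfrak{k}_\alpha = \mathfrak{k}_{-\alpha}$; the same swap handles $\mathfrak{p}_\alpha = \mathfrak{p}_{-\alpha}$. The inclusion $\mathfrak{k}_\alpha \oplus \mathfrak{p}_\alpha \subseteq \mathfrak{g}_\alpha \oplus \mathfrak{g}_{-\alpha}$ is clear, the left-hand sum is direct because $\mathfrak{k} \cap \mathfrak{p} = \{0\}$, and the dimension count from (1) forces equality.

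For (3), I would decompose an arbitrary $Y \in \mathfrak{k}$ via the root space decomposition as $Y = Y_0 + \sum_\alpha Y_\alpha$, apply $\theta$, and use $\theta Y = Y$ together with uniqueness of this decomposition to conclude $\theta Y_0 = Y_0$ (so $Y_0 \in \mathfrak{k}_0$) and $Y_{-\alpha} = \theta Y_\alpha$; pairing each positive root with its negative then rewrites $Y$ as an element of $\mathfrak{k}_0 \oplus \bigoplus_{\alpha \in \Lambda^+} \mathfrak{k}_\alpha$. The decomposition of $\mathfrak{p}$ is parallel, the one extra input being $\mathfrak{g}_0 \cap \mathfrak{p} = \mathfrak{a}$: this is the only nontrivial step, and it follows from the maximality of $\mathfrak{a}$ among abelian subalgebras of $\mathfrak{p}$, since any $X \in \mathfrak{p}$ commuting with all of $\mathfrak{a}$ must itself lie in $\mathfrak{a}$. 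I do not expect any serious obstacle, as the whole proposition is essentially a direct consequence of $\theta$-equivariance of the root space decomposition.
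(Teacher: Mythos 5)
Your proof is correct. The paper itself supplies no argument for this proposition but cites it directly from Eberlein's book \cite[Proposition 2.14.2]{Eb}; the route you take — starting from the fact that the Cartan involution $\theta$ is a Lie algebra automorphism acting as $-I$ on $\mathfrak{a}$, hence $\theta(\mathfrak{g}_\alpha) = \mathfrak{g}_{-\alpha}$, and then reading off all three parts by $\theta$-equivariance and the directness of the root space decomposition — is the standard one and is essentially what that reference does.
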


\begin{remark}
Since $\mathfrak{p}_\alpha=(\mathfrak{g}_\alpha+\mathfrak{g}_{-\alpha})\cap \mathfrak{p}$, the direct sum of
$\mathfrak{p}$ in $(3)$ of Proposition \ref{prop:Eberlein} is also orthogonal with respect to the canonical inner
product on $\mathfrak{p}$.
\end{remark}

We now analyze the adjoint action of $\mathfrak{k}$ on $\mathfrak{a}$. Let $u\in \mathfrak{k}_\alpha$ and $v\in \mathfrak{a}$, we can write $u$ as $(I+\theta)w$ where $w\in \mathfrak{g}_\alpha$, hence we have
\begin{align*}
 [u,v]&=[(I+\theta)w,v]=[w,v]+[\theta w,v] =-\alpha(v)w+\theta[w,-v] \\
 &=-\alpha(v)w+\theta (\alpha(v)w)=-\alpha(v)(I-\theta)(w)\\
 &=-\alpha(v)(I-\theta)(I+\theta)^{-1}u
\end{align*}
This gives the following proposition.

\begin{prop}\label{prop:Lie algebra action}
Let $\alpha$ be a root. The adjoint action of $\mathfrak{k}_\alpha$ on $\mathfrak{a}$ is given by
$$[u,v]=-\alpha(v)(I-\theta)(I+\theta)^{-1}u$$
for any $u\in \mathfrak{k}_\alpha$ and $v\in \mathfrak{a}$. In particular, $\mathfrak{k}_\alpha$ maps $v$ into $\mathfrak{p}_\alpha$.
\end{prop}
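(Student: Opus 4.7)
The plan is to derive the formula by direct computation, exploiting the linear isomorphism $(I+\theta) : \mathfrak{g}_\alpha \to \mathfrak{k}_\alpha$ supplied by Proposition \ref{prop:Eberlein}(1). Given $u \in \mathfrak{k}_\alpha$, the first step is to write $u = (I+\theta)w$ for a unique $w \in \mathfrak{g}_\alpha$, so that by bilinearity of the bracket, $[u,v] = [w,v] + [\theta w, v]$ for any $v \in \mathfrak{a}$.

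The two resulting terms can then be computed separately. For the first, the defining property of the root space $\mathfrak{g}_\alpha$ gives $[v,w] = \alpha(v) w$ for $v \in \mathfrak{a}$ and $w \in \mathfrak{g}_\alpha$, hence $[w,v] = -\alpha(v)\, w$. For the second, I would invoke the two key features of the Cartan involution $\theta$: that it is a Lie algebra automorphism (hence preserves the bracket), and that it acts as $-I$ on $\mathfrak{p} \supset \mathfrak{a}$. Since $v = \theta(-v)$, one gets $[\theta w, v] = [\theta w, \theta(-v)] = \theta[w, -v] = \alpha(v)\, \theta w$.

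Adding the two contributions yields $[u,v] = -\alpha(v)\,(w - \theta w) = -\alpha(v)(I-\theta)w$, and substituting $w = (I+\theta)^{-1} u$ (again legal by Proposition \ref{prop:Eberlein}(1)) produces the claimed formula. The last assertion that $[u,v] \in \mathfrak{p}_\alpha$ follows immediately from the definition $\mathfrak{p}_\alpha = (I-\theta)\mathfrak{g}_\alpha$, since the right-hand side is a scalar multiple of $(I-\theta)w$.

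There is no substantive obstacle here, as the proof is essentially a one-line calculation. The only things to watch carefully are the sign tracking (both $[A,Y]$ versus $[Y,A]$ and the fact that $\theta v = -v$ introduces two sign changes that partially cancel) and the need to use \emph{both} properties of $\theta$ — its action on $\mathfrak{p}$ and its compatibility with the bracket — when handling $[\theta w, v]$.
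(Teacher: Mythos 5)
Your computation is correct and follows exactly the same route as the paper's: write $u=(I+\theta)w$, split $[u,v]=[w,v]+[\theta w,v]$, handle the first term by the definition of $\mathfrak{g}_\alpha$ and the second by pulling $\theta$ through the bracket using $v=\theta(-v)$, then recombine and invert $(I+\theta)$. Your version is, if anything, slightly more explicit about the justification of the step $[\theta w,v]=\theta[w,-v]$, which the paper leaves terse.
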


Assume $v\in \mathfrak{a}\subseteq T_xX$ is inside a fixed flat through $x$, and let $K_v$ be the stabilizer of $v$ in $K$.
Then the space $K_v\mathfrak{a}$ is the tangent space of the union of all flats that goes through $v$. Equivalently, it is
the union of all vectors that are parallel to $v$, hence it can be identified with
$\mathfrak{a}\oplus\bigoplus_{\alpha\in \Lambda^+,\alpha(v)=0}\mathfrak{p}_\alpha$. In particular, if $v$ is regular,
then the space is just $\mathfrak{a}$. Moreover, if we denote by $\mathfrak{k}_v$ the Lie algebra of $K_v$, then
$\mathfrak{k}_v=\{u\in \mathfrak{k}\;|\;[u,v]=0\}=\mathfrak{k}_0 \oplus\bigoplus_{\alpha\in \Lambda^+,\alpha(v)=0}\mathfrak{k}_\alpha$.

\subsection{Patterson-Sullivan measures}
Let $X=G/K$ be a symmetric space of non-compact type, and $\Gamma$ be a cocompact lattice in G. In \cite{Al}, Albuquerque generalizes the construction of Patterson-Sullivan to higher rank symmetric spaces. He showed that for each $x\in X$, we can assign a probability measure $\mu(x)$ that is $G$-equivariant and is fully supported on the Furstenberg boundary $\partial_F(X)$. Moreover, for $x,y\in X$ and $\theta\in \partial_F(X)$, the Radon-Nikodym derivative is given by
$$\frac{d\mu(x)}{d\mu(y)}(\theta)=e^{hB(x,y,\theta)}$$
where $h$ is the volume entropy of $X/\Gamma$, and $B(x,y,\theta)$ is the Busemann function on $X$. Recall that, in a non-positively curved space $X$, the Busemann function $B$ is defined by
$$B(x,y,\theta)=\lim_{t\rightarrow\infty}(d_X(y,\gamma_\theta(t))-t)$$
where $\gamma_\theta$ is the unique geodesic ray from $x$ to $\theta$. Fixing a basepoint $O$ in $X$, we shorten $B(O, y,\theta)$ to just $B(y,\theta)$. Notice that for fixed $\theta\in \partial_F(X)$ the Busemann function is convex on $X$, and by integrating on $\partial_F(X)$, we obtain, for any probability measure $\nu$ that is fully supported on the Furstenberg boundary $\partial_FX$, a strictly convex function
$$x\mapsto \int_{\partial_F X}B(x,\theta)d\nu(\theta)$$
(See \cite[Proposition \ref{thm:bound-ratio}]{CF1} for a proof of this last statement.)

Hence we can define the barycenter $bar(\nu)$ of $\nu$ to be the unique point in $X$ where the function attains its minimum. It is clear that this definition is independent of the choice of basepoint $O$.

\subsection{Barycenter method}\label{sec:barycenter method}
In this section, we discuss the barycentric straightening introduced by Lafont and Schmidt \cite{LS} (based on the barycenter method originally developed by Besson, Courtois, and Gallot \cite{BCG}). Let $X=G/K$ be a symmetric space of non-compact type, and $\Gamma$ be a cocompact lattice in G. We denote by $\Delta^k_s$ the standard spherical k-simplex in the Euclidean space, that is
$$\Delta^k_s=\Big\{(a_1,\ldots ,a_{k+1})\mid a_i\geq 0, \sum_{i=1}^{k+1}a_i^2=1\Big\}\subseteq \mathbb{R}^{k+1},$$
with the induced Riemannian metric from $\mathbb{R}^{k+1}$, and with ordered vertices $(e_1,\ldots,e_{k+1})$. Given any singular k-simplex $f:\Delta^k_s\rightarrow X$, with ordered vertices $V=(x_1,\ldots,x_{k+1})=\left(f(e_1),\ldots,f(e_{k+1})\right)$, we define the k-straightened simplex
$$\st_k(f):\Delta^k_s\rightarrow X$$
$$st_k(f)(a_1,\ldots ,a_{k+1}):=bar\left(\sum_{i=1}^{k+1}a_i^2\mu(x_i)\right)$$
where $\mu(x_i)$ is the Patterson-Sullivan measure at $x_i$. We notice that $st_k(f)$ is determined by the (ordered) vertex set $V$, and we denote $st_k(f)(\delta)$ by $st_V(\delta)$, for $\delta\in\Delta^k_s$.

Observe that the map $\st_k(f)$ is $C^1$, since one can view this map as the restriction of the $C^1$-map $\st_n(f)$ to a k-dimensional subspace (see e.g. \cite[Property (3)]{LS}). For any $\delta=\sum_{i=1}^{k+1}a_ie_i\in \Delta_s^{k}$, $\st_k(f)(\delta)$ is defined to be the unique point where the function
$$x\mapsto \int_{\partial_F X}B(x,\theta)d\left(\sum_{i=1}^{k+1}a_i^2\mu(x_i)\right)(\theta)$$
is minimized. Hence, by differentiating at that point, we get the 1-form equation
$$\int_{\partial_F X}dB_{(st_V(\delta),\theta)}(\cdot)d\left(\sum_{i=1}^{k+1}a_i^2\mu(x_i)\right)(\theta)\equiv 0$$
which holds identically on the tangent space $T_{st_V(\delta)}X$.
Differentiating in a direction $u\in T_\delta(\Delta_s^k)$ in the source, one obtains the $2$-form equation
\begin{equation}\label{eqn:2-form}
\sum_{i=1}^{k+1}2a_i\langle u,e_i\rangle_\delta\int_{\partial_F X}dB_{(st_V(\delta),\theta)}(v)d(\mu(x_i))(\theta)
+\int_{\partial_F X}DdB_{(st_V(\delta),\theta)}(D_\delta(st_V)(u),v)d\left(\sum_{i=1}^{k+1}a_i^2\mu(x_i)\right)(\theta)\equiv 0
\end{equation}
which holds for every $u\in T_\delta(\Delta_s^k)$ and $v\in T_{st_V(\delta)}(X)$.
Now we define two semi-positive definite quadratic forms $Q_1$ and $Q_2$ on $T_{st_V(\delta)}(X)$:
$$Q_1(v,v)=\int_{\partial_F X}dB^2_{(st_V(\delta),\theta)}(v)d\left(\sum_{i=1}^{k+1}a_i^2\mu(x_i)\right)(\theta)$$
$$Q_2(v,v)=\int_{\partial_F X}DdB_{(st_V(\delta),\theta)}(v,v)d\left(\sum_{i=1}^{k+1}a_i^2\mu(x_i)\right)(\theta)$$
In fact, $Q_2$ is positive definite since $\sum_{i=1}^{k+1}a_i^2\mu(x_i)$ is fully supported on $\partial_F X$ (see \cite[Section 4]{CF1}). From Equation (\ref{eqn:2-form}), we obtain, for $u\in T_\delta (\Delta _s^k)$ a unit vector
and $v\in T_{st_V(\delta)}(X)$ arbitrary, the following
\begin{align}
\abs{Q_2(D_\delta(st_V)(u),v)}&=\abs{-\sum_{i=1}^{k+1}2a_i\langle u,e_i\rangle_\delta\int_{\partial_F X}dB_{(st_V(\delta),\theta)}(v)d(\mu(x_i))(\theta)} \label{eqn:Q1-bounds-Q2}\\
& \leq \left(\sum_{i=1}^{k+1}\langle u,e_i\rangle_\delta^2\right)^{1/2}\left(\sum_{i=1}^{k+1}4a_i^2\left(\int_{\partial_F X}dB_{(st_V(\delta),\theta)}(v)d(\mu(x_i))(\theta)\right)^2\right)^{1/2} \nonumber \\
&\leq 2\left(\sum_{i=1}^{k+1}a_i^2\int_{\partial_F X}dB^2_{(st_V(\delta),\theta)}(v)d(\mu(x_i))(\theta)\int_{\partial_F X}1d(\mu(x_i))\right)^{1/2} \nonumber \\
&=2Q_1(v,v)^{1/2} \nonumber
\end{align}
via two applications of the Cauchy-Schwartz inequality.

We restrict these two quadratic forms to the subspace $S=Im(D_\delta(st_V))\subseteq T_{st_V(\delta)}(X)$, and denote the corresponding $k$-dimensional endomorphisms by $H_\delta$ and $K_\delta$, that is
$$Q_1(v,v)=\langle H_\delta(v),v\rangle_{st_V(\delta)}$$
$$Q_2(v,v)=\langle K_\delta(v),v\rangle_{st_V(\delta)}$$
for all $v\in S$.

For points $\delta\in\Delta_s^k$ where $st_V$ is nondegenerate, we now pick orthonormal bases $\{u_1,\ldots ,u_k\}$ on $T_\delta(\Delta_s^k)$, and $\{v_1,\ldots ,v_k\}$ on $S\subseteq T_{st_V(\delta)}(X)$. We choose these so that $\{v_i\}_{i=1}^k$ are eigenvectors of $H_\delta$, and $\{u_1,\ldots, u_k\}$ is the resulting basis obtained by applying the orthonormalization process to the collection of pullback vectors $\{(K_\delta\circ D_\delta(st_V))^{-1}(v_i)\}_{i=1}^k$. So we obtain
\begin{align*}
\det(Q_2|_S)\cdot \abs{Jac_\delta(st_V)} &=\abs{\det(K_\delta)\cdot Jac_\delta(st_V)} \\
& =\abs{\det(\langle K_\delta\circ D_\delta(st_V)(u_i),v_j\rangle)}
\end{align*}
By the choice of bases, the matrix $(\langle K_\delta\circ D_\delta(st_V)(u_i),v_j\rangle)$ is upper triangular, so we have
\begin{align*}
\abs{\det(\langle K_\delta\circ D_\delta(st_V)(u_i),v_j\rangle)}& =\abs{\prod_{i=1}^k\langle K_\delta\circ D_\delta(st_V)(u_i),v_i\rangle} \\
& \leq \prod_{i=1}^k 2\langle H_\delta(v_i),v_i\rangle^{1/2} \\
&=2^k\det(H_\delta)^{1/2}=2^k\det(Q_1|_S)^{1/2}
\end{align*}
where the middle inequality is obtained via Equation (\ref{eqn:Q1-bounds-Q2}).
Hence we get the inequality
$$\abs{Jac_\delta(st_V)}\leq 2^k \cdot \frac{\det(Q_1|_S)^{1/2}}{\det(Q_2|_S)}$$
We summarize the above discussion into the following proposition.
\begin{prop}\label{prop:reduction}
Let $Q_1$, $Q_2$ be the two positive semidefinite quadratic forms defined as above (note $Q_2$ is actually positive definite).
Assume there exists a constant $C$ that only depends on $X$, with the property that
$$\frac{\det(Q_1|_S)^{1/2}}{\det(Q_2|_S)}\leq C$$
for any k-dimensional subspace $S\subseteq T_{st_V(\delta)}X$. Then the quantity $|Jac(st_V)(\delta)|$
is universally bounded -- independent of the choice of $(k+1)$-tuple of points $V\subset X$, and of the point $\delta \in \Delta_s^k$.
\end{prop}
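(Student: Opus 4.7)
The statement is essentially a repackaging of the chain of inequalities derived in the paragraphs immediately preceding it, so my plan is to organize those computations into a clean, self-contained argument, with attention paid to the degenerate locus. First, I would dispose of the points $\delta \in \Delta_s^k$ where $D_\delta(st_V)$ fails to be of full rank: at any such $\delta$ one has $|Jac_\delta(st_V)| = 0$ and the conclusion holds trivially. For the remaining $\delta$, the image $S = \mathrm{Im}(D_\delta(st_V))$ is genuinely $k$-dimensional, and the endomorphism $K_\delta$ is invertible because $Q_2$ is positive definite on all of $T_{st_V(\delta)}X$ (the measure $\sum a_i^2 \mu(x_i)$ having full support on $\partial_F X$), so the quotient $\det(Q_1|_S)^{1/2}/\det(Q_2|_S)$ is well-defined.

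On this nondegenerate locus, the target of the argument is the effective pointwise bound
$$|Jac_\delta(st_V)| \leq 2^k \cdot \frac{\det(Q_1|_S)^{1/2}}{\det(Q_2|_S)},$$
from which the proposition is immediate by the standing hypothesis. The derivation of this bound begins from the variational characterization of $st_V(\delta)$ as the minimizer of the strictly convex function $x \mapsto \int_{\partial_F X} B(x,\theta)\, d(\sum a_i^2 \mu(x_i))(\theta)$. Differentiating this minimality condition once in the target produces the 1-form identity already displayed, and differentiating it again in a source direction $u \in T_\delta\Delta_s^k$ yields the 2-form identity (\ref{eqn:2-form}). Two successive applications of Cauchy--Schwarz, carried out in (\ref{eqn:Q1-bounds-Q2}), then reduce this identity to the key bilinear inequality $|Q_2(D_\delta(st_V)(u), v)| \leq 2\, Q_1(v,v)^{1/2}$, valid for every unit $u \in T_\delta\Delta_s^k$ and every $v \in T_{st_V(\delta)}X$.

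The final step is to convert this bilinear inequality into a determinant inequality via a carefully adapted choice of orthonormal bases: an eigenbasis $\{v_1,\ldots,v_k\}$ of $H_\delta$ on $S$, together with the orthonormal basis $\{u_1,\ldots,u_k\}$ of $T_\delta\Delta_s^k$ obtained by Gram--Schmidt on the pullback vectors $(K_\delta\circ D_\delta(st_V))^{-1}(v_i)$. With this choice the matrix $\bigl(\langle K_\delta\circ D_\delta(st_V)(u_i), v_j\rangle\bigr)$ is upper triangular, so its determinant equals the product of its diagonal entries, each of which is bounded by $2\langle H_\delta v_i, v_i\rangle^{1/2}$ by the key inequality. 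Taking the product, one obtains $\det(Q_2|_S)\cdot|Jac_\delta(st_V)| \leq 2^k \det(Q_1|_S)^{1/2}$, and dividing by the strictly positive $\det(Q_2|_S)$ yields the desired bound. There is no genuine obstacle in this argument; the only subtle point is that the Gram--Schmidt construction requires $D_\delta(st_V)$ to surject onto $S$, which is precisely why the degenerate locus has to be isolated at the outset.
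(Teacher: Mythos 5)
Your argument is correct and follows exactly the paper's own derivation in Section \ref{sec:barycenter method}: differentiate the variational characterization twice, apply Cauchy--Schwarz to obtain $|Q_2(D_\delta(st_V)u,v)| \le 2Q_1(v,v)^{1/2}$, and choose the adapted bases (eigenbasis of $H_\delta$, Gram--Schmidt on the pullback vectors) to make the matrix triangular and extract the determinant bound $|Jac_\delta(st_V)|\le 2^k\det(Q_1|_S)^{1/2}/\det(Q_2|_S)$. Your explicit treatment of the degenerate locus, where the Jacobian vanishes trivially, is a small but welcome clarification of a point the paper leaves implicit.
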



\section{Jacobian Estimate}\label{sec:Jacobian}
Let $X=G/K$ be an irreducible symmetric space of non-compact type. We fix an arbitrary point $x\in X$ and identify $T_xX$ with $\mathfrak{p}$. Let $\mu$ be a probability measure that is fully supported on the Furstenberg boundary $\partial_FX$. Using the same notation as in Section 2.3, we define a semi-positive definite quadratic form $Q_1$ and a positive definite quadratic form $Q_2$ on $T_xX$
$$Q_1(v,v)=\int_{\partial_F X}dB^2_{(x,\theta)}(v)d\mu(\theta)$$
$$Q_2(v,v)=\int_{\partial_F X}DdB_{(x,\theta)}(v,v)d\mu(\theta)$$
for $v\in T_x(X)$.
We will follow the techniques of Connell and Farb \cite{CF1}, \cite{CF2}, and show the following theorem.

\begin{thm}\label{thm:bound-ratio}
Let $X$ be an irreducible symmetric space of non-compact type excluding $\SL(3,\mathbb R)/\SO(3)$ and $\SL(4,\mathbb{R})/\SO(4)$, and let $r=rank(X)\geq 2$. If $n=\dim(X)$, then there exists a constant $C$ that only depends on $X$, such that
$$\frac{\det(Q_1|_S)^{1/2}}{\det(Q_2|_S)}\leq C$$
for any subspace $S\subseteq T_xX$ with $n-r+2\leq \dim(S)\leq n$.
\end{thm}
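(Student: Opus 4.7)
The plan is to exploit the root space decomposition of $T_xX$ in order to make the quadratic forms $Q_1$ and $Q_2$ as explicit as possible, and then to compare them in the spirit of Connell--Farb \cite{CF1,CF2}. For each $\theta\in\partial_F X$ one has a unique unit regular vector $v_\theta\in T_xX$ pointing to $\theta$, together with a root space decomposition $T_xX=\mathfrak{a}^{\theta}\oplus\bigoplus_{\alpha\in\Lambda^+}\mathfrak{p}^{\theta}_{\alpha}$ relative to the maximal flat containing $v_\theta$ (the superscript $\theta$ serving to emphasize the dependence on the chosen flat). A standard Jacobi field computation for stable horospheres then yields the pointwise formulas $dB_{(x,\theta)}(v)=-\langle v,v_\theta\rangle$ and $DdB_{(x,\theta)}(v,v)=\sum_{\alpha\in\Lambda^+}\alpha(v_\theta)\,|v^{\theta}_{\alpha}|^2$, where $v^{\theta}_{\alpha}$ is the $\mathfrak{p}^{\theta}_{\alpha}$-component of $v$. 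Integrating against $\mu$ gives the first crucial observation: $Q_1(v,v)\le|v|^2$, so $\det(Q_1|_S)\le 1$, and the theorem reduces to pairing this with a suitable lower bound on $\det(Q_2|_S)$.

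Next, I would diagonalize $Q_2|_S$ in some orthonormal basis $\{v_1,\dots,v_k\}$ of $S$, with eigenvalues $\lambda_1\le\cdots\le\lambda_k$. Hadamard's inequality gives $\det(Q_1|_S)\le\prod_i Q_1(v_i,v_i)$, so it suffices to produce pointwise bounds of the shape $Q_1(v_i,v_i)^{1/2}\le C_i\cdot\lambda_i$ whose product $\prod_i C_i$ is controlled by a constant $C(X)$ depending only on $X$. For eigenvectors $v_i$ with a substantial component in some $\mathfrak{p}^{\theta}_{\alpha}$-direction this is essentially automatic: $\lambda_i=Q_2(v_i,v_i)$ is then bounded below by a $\mu$-integral of $\alpha(v_\theta)$, which in turn dominates the trivially bounded quantity $Q_1(v_i,v_i)\le1$. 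The genuinely delicate case is the opposite one, namely when $v_i$ is nearly aligned with the flat $\mathfrak{a}$: since $DdB_{(x,\theta)}$ vanishes in the flat direction of $v_\theta$, both $Q_1(v_i,v_i)$ and $\lambda_i$ can then be simultaneously small (for instance as $\mu$ concentrates near a single boundary point).

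The main obstacle is exactly this ``flat direction'' degeneracy, and resolving it is where the hypothesis $\dim(S)\ge n-r+2$ enters. An elementary dimension count shows that under this hypothesis the orthogonal projection $\pi_{\mathfrak{a}}(S)\subseteq\mathfrak{a}$ has dimension at least $2$, so the obstruction to controlling $Q_2|_S$ from below is concentrated in codimension at least $2$ inside the flat. I plan to exploit this by applying Cauchy--Schwarz to the $\mu$-integrals defining $Q_1$ and $Q_2$---very much in the spirit of the estimate (\ref{eqn:Q1-bounds-Q2}) already used in Section \ref{sec:barycenter method}---to trade the $\mathfrak{a}$-contribution of $Q_1(v_i,v_i)$ against the $\mathfrak{p}^{\theta}_{\alpha}$-contribution to $\lambda_i$ for a suitably chosen root $\alpha$ and direction $\theta$. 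The combinatorial bookkeeping that selects such a root uniformly in $\mu$ and executes this transfer simultaneously on every ``flat-like'' eigenvector is the content of Section~\ref{sec:combinatorial-problem}; this is precisely the step that fails for $\SL(3,\mathbb{R})/\SO(3)$ and $\SL(4,\mathbb{R})/\SO(4)$, whose root systems are too small to supply the required root at each such eigenvector.
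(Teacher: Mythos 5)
Your framework — Hadamard's inequality $\det(Q_1|_S)\le\prod_i Q_1(v_i,v_i)$ applied to the $Q_2$-eigenbasis, followed by a pointwise bound $Q_1(v_i,v_i)^{1/2}\le C_i\lambda_i$ — cannot succeed, and the reason is exactly the degeneracy you flag but don't resolve. When $v_i$ is a unit vector nearly aligned with $\mathcal{F}$ and $\mu$ concentrates near a single boundary point $e_1(\infty)$, both $Q_1(v_i,v_i)$ and $\lambda_i=Q_2(v_i,v_i)$ are of the \emph{same} order, say $\sim\epsilon^2$: writing $O_\theta^*=\mathrm{id}+O(\epsilon)$ for $\theta$ in the small support of $\mu$ and taking $v_i\in\mathcal{F}$ orthogonal to $e_1$, one finds $\langle O_\theta^*v_i,e_1\rangle=O(\epsilon)$ and $\sin\angle(O_\theta^*v_i,\mathcal{F})=O(\epsilon)$. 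So the required inequality $Q_1(v_i,v_i)\lesssim\lambda_i^2$ would force $\epsilon^2\lesssim\epsilon^4$, which fails, and $Q_1(v_i,v_i)^{1/2}/\lambda_i\sim 1/\epsilon$ blows up. No Cauchy--Schwarz manipulation of the integrals defining $Q_1(v_i,v_i)$ and $Q_2(v_i,v_i)$ \emph{at the same vector} $v_i$ can rescue this.

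The paper's mechanism is structurally different, and the difference is the whole point. For each $Q_2$-eigenvector $v_i$ with small eigenvalue $L_i$, one does \emph{not} try to bound $Q_1(v_i,v_i)$; instead one produces \emph{two other} unit vectors $v_i',v_i''\in\mathcal{F}^\perp$ (lying in appropriate root spaces $\mathfrak{p}_\alpha$, chosen via Hall's Marriage Theorem applied to the roots with $\alpha(v_i^*)\neq0$) whose $Q_1$-values are $\lesssim L_i$, because the $K$-action that moves $v_i$ off $\mathcal{F}$ simultaneously moves $v_i',v_i''$ off $\mathcal{F}^\perp$ at a comparable rate (Lemma~\ref{lem:angle inequality}). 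Two vectors per small eigenvalue are needed precisely to beat the exponent $1/2$ in $\det(Q_1|_S)^{1/2}/\det(Q_2|_S)$: one wants $\det(Q_1|_S)\lesssim\prod L_i^2\lesssim\det(Q_2|_S)^2$. Finally, the hypothesis $\dim(S)\ge n-r+2$ enters not through the dimension of $\pi_{\mathfrak{a}}(S)$, but through eigenvalue interlacing (Lemma~\ref{lem:compare-eigvals}): restricting $Q_1$ from $T_xX$ to a codimension-$(r-2)$ subspace $S$ can shift each eigenvalue up by at most $r-2$ positions, so the matching theorem must actually produce $2k+(r-2)$ small directions for $Q_1$ on $T_xX$, the extra $r-2$ serving as padding that gets absorbed by the restriction. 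This is why the frame in Theorem~\ref{thm:weak eigenvalue matching} has $r$ vectors attached to $v_1$ rather than two. Your dimension count on $\pi_{\mathfrak{a}}(S)$ is true but plays no role in closing the argument; without replacing the pointwise bound on $Q_1(v_i,v_i)$ by the auxiliary-vector matching and the interlacing step, the proof does not go through.
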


 In view of Proposition \ref{prop:reduction}, this implies that the barycentrically straightened
simplices of dimension $\geq n-r+2$ have uniformly controlled Jacobians. The reader whose primary interest is
bounded cohomology, and who is willing to take Theorem \ref{thm:bound-ratio} on faith, can skip ahead to Section
\ref{sec:surjectivity} for the proof of the {\bf Main Theorem}.

The rest of this Section will be devoted to the proof of Theorem \ref{thm:bound-ratio}. In Section
\ref{subsec:simplify-quad-form}, we explain some simplifications of the quadratic forms, allowing us to give geometric
interpretations for the quantities involved in Theorem \ref{thm:bound-ratio}. In Section \ref{subsec:eigval-matching},
we formulate the ``weak eigenvalue matching'' Theorem \ref{thm:weak eigenvalue matching} (which will be
established in Section \ref{sec:combinatorial-problem}). Finally, in Section
\ref{subsec:proof-theorem-bounded-ratio}, we will deduce Theorem \ref{thm:bound-ratio}
from Theorem \ref{thm:weak eigenvalue matching}.

\subsection{Simplifying the quadratic forms}\label{subsec:simplify-quad-form}
Following \cite[Section 4.3]{CF1}, we fix a flat $\mathcal{F}$ going through $x$, and denote the tangent space by $\mathfrak{a}$, so $\dim(\mathfrak{a})=r$ is the rank of $X$. By abuse of notation, we identify $\mathfrak{a}$ with $\mathcal{F}$. Choose an orthonormal basis $\{e_i\}$ on $T_xX$ such that $\{e_1,...,e_r\}$ spans $\mathcal{F}$, and assume $e_1$ is regular so that $e_1(\infty)\in \partial_F X$. Then $Q_1$, $Q_2$ can be expressed in the following matrix forms.
$$Q_1=\int_{\partial_F X}O_\theta\begin{pmatrix}
                                   1 & 0\\
                                   0 & 0^{(n-1)} \\
                                 \end{pmatrix}O_\theta^\ast
d\mu(\theta)$$
$$Q_2=\int_{\partial_F X}O_\theta\begin{pmatrix}
                                   0^{(r)} & 0\\
                                   0 & D_\lambda^{(n-r)} \\
                                 \end{pmatrix}O_\theta^\ast
d\mu(\theta)$$
where $D_\lambda=diag(\lambda_1,...,\lambda_{(n-r)})$, and $O_\theta$ is the orthogonal matrix corresponding to the unique element in $K$ that sends $e_1$ to $v_{(x,\theta)}$, the direction at $x$ pointing towards $\theta$. Moreover, there exists a constant $c>0$ that only depends on $X$, so that $\lambda_i\geq c$ for $1\leq i\leq n-r$. For more details, we refer the readers to the original \cite{CF1}.

Denote by $\bar Q_2$ the quadratic form given by
$$\bar Q_2=\int_{\partial_F X}O_\theta\begin{pmatrix}
                                   0^{(r)} & 0\\
                                   0 & I^{(n-r)} \\
                                 \end{pmatrix}O_\theta^\ast
d\mu(\theta)$$
Then the difference $Q_2 - c\bar Q_2$ is positive semi-definite, hence $\det(Q_2|_S) \geq \det(c \bar Q_2|_S)$.
So in order to show Theorem \ref{thm:bound-ratio}, it suffices to assume $Q_2$ has the matrix form
$$\int_{\partial_F X}O_\theta\begin{pmatrix}
                                   0^{(r)} & 0\\
                                   0 & I^{(n-r)} \\
                                 \end{pmatrix}O_\theta^\ast
d\mu(\theta)$$
Given any $v\in T_xX$, we have the following geometric estimates on the value of the quadratic form
\begin{align}
Q_1(v,v)&=\int_{\partial_F X}v^tO_\theta\begin{pmatrix}
                                   1 & 0\\
                                   0 & 0^{(n-1)} \\
                                 \end{pmatrix}O_\theta^\ast v \hskip 2pt d\mu(\theta) \label{eqn:Q1-formula}\\
&=\int_{\partial_F X}\langle O_\theta^\ast v,e_1\rangle^2d\mu(\theta) \nonumber \\
&\leq \int_{\partial_F X}\sum_{i=1}^r\langle O_\theta^\ast v,e_i\rangle^2d\mu(\theta) \nonumber \\
&=\int_{\partial_F X}\sin^2(\angle (O_\theta^\ast v,\mathcal{F}^\perp))d\mu(\theta) \nonumber
\end{align}
Roughly speaking, $Q_1(v,v)$ is bounded above by the weighted average of the time the $K$-orbit spends away from
$\mathcal{F}^\perp$. Similarly we can estimate
\begin{align}
Q_2(v,v)&=\int_{\partial_F X}v^tO_\theta\begin{pmatrix}
                                   0^{(r)} & 0\\
                                   0 & I^{(n-r)} \\
                                 \end{pmatrix}O_\theta^\ast v \hskip 2pt
d\mu(\theta) \label{eqn:Q2-formula}\\
&=\int_{\partial_F X}\sum_{i=r+1}^n\langle O_\theta^\ast v,e_i\rangle^2d\mu(\theta) \nonumber \\
&=\int_{\partial_F X}\sin^2(\angle (O_\theta^\ast v,\mathcal{F}))d\mu(\theta) \nonumber
\end{align}
So again, $Q_2(v,v)$ roughly measures the weighted average of the time the $K$-orbit spends away from $\mathcal {F}$.

\subsection{Eigenvalue matching}\label{subsec:eigval-matching}
In their original paper, Connell and Farb showed an eigenvalue matching theorem \cite[Theorem 4.4]{CF1}, in order to get the Jacobian estimate in top dimension. For the small eigenvalues of $Q_2$ (there are at most $r$ of them), they want to find twice as many comparatively small eigenvalues of $Q_1$. Then by taking the product of those eigenvalues, they obtain a uniform upper bound on the ratio of determinants $\det (Q_1)^{1/2}/\det (Q_2)$, which yields an upper bound on the Jacobian.
However, as was pointed out by Inkang Kim and Sungwoon Kim, there was a mistake in the proof. Connell and Farb fixed the gap by showing a weak eigenvalue matching theorem \cite[Theorem 0.1]{CF2}, which was sufficient to imply the Jacobian inequality.

We generalize this method and show that in fact we can find $(r-2)$ additional small eigenvalues of
$Q_1$ that are bounded by a universal constant times the smallest eigenvalue of $Q_2$. This allows for the
Jacobian inequality to be maintained when we pass down to a subspace of codimension at most $(r-2)$.
We now state our version of the weak eigenvalue matching theorem.

\begin{defn}
We call a set of unit vectors $\{w_1,...,w_k\}$ a $\delta$-orthonormal $k$-frame if $\langle w_i,w_j\rangle< \delta$ for all $1\leq i<j\leq k$.
\end{defn}

\begin{thm}(Weak eigenvalue matching.)\label{thm:weak eigenvalue matching}
 Let $X$ be an irreducible symmetric space of non-compact type, with $r=rank(X)\geq 2$, excluding $\SL(3,\mathbb R)/\SO(3)$ and $\SL(4,\mathbb{R})/\SO(4)$. There exist constants $C'$, $C$, $\delta$ that only depend on $X$ so that the following holds. Given any $\epsilon<\delta$, and any orthonormal $k$-frame $\{v_1,...,v_k\}$ in $T_xX$ with $k\leq r$, whose span $V$ satisfies $\angle(V,\mathcal{F})\leq \epsilon$, then there is a $(C'\epsilon)$-orthonormal $(2k+r-2)$-frame given by vectors $\{v_1',v_1'',...,v_1^{(r)},v_2',v_2'',...,v_k',v_k''\}$, such that for $i=1,...,k$, and $j=1,...,r$, we have the following inequalities:
$$\angle(hv_i',\mathcal{F}^\perp)\leq C\angle(hv_i,\mathcal{F})$$
$$\angle(hv_i'',\mathcal{F}^\perp)\leq C\angle(hv_i,\mathcal{F})$$
$$\angle(hv_1^{(j)},\mathcal{F}^\perp)\leq C\angle(hv_1,\mathcal{F})$$
for all $h\in K$, where $hv$ is the linear action of $h\in K$ on $v\in T_xX\simeq \mathfrak{p}$.
\end{thm}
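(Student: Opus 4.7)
The plan is to construct each auxiliary vector by applying the adjoint action of a carefully chosen $u\in\mathfrak{k}_\alpha$ to the given $v_i$, using the structural facts in Propositions \ref{prop:Eberlein} and \ref{prop:Lie algebra action}. First I would project each $v_i$ orthogonally onto $\mathfrak{a}$ to obtain $\bar v_i$; since $\angle(V,\mathcal{F})\leq\epsilon$, the $\bar v_i$ form a near-orthonormal frame in $\mathfrak{a}$ with $\|v_i-\bar v_i\|=O(\epsilon)$. For each root $\alpha$ and unit vector $u\in\mathfrak{k}_\alpha$ with $|\alpha(\bar v_i)|$ bounded below, I would define
$$v_i(\alpha,u)\;:=\;\frac{[u,v_i]}{\|[u,v_i]\|}.$$
By Proposition \ref{prop:Lie algebra action}, $[u,\bar v_i]=-\alpha(\bar v_i)(I-\theta)(I+\theta)^{-1}u\in\mathfrak{p}_\alpha$ has norm comparable to $|\alpha(\bar v_i)|$, so $v_i(\alpha,u)$ is well-defined and lies within $O(\epsilon/|\alpha(\bar v_i)|)$ of $\mathfrak{p}_\alpha\subseteq\mathcal{F}^\perp$.

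Next I would verify the angle bound via $K$-equivariance of the bracket. For any $h\in K$ one computes
$$h\cdot v_i(\alpha,u)\;=\;\frac{[\mathrm{Ad}(h)u,\,hv_i]}{\|[u,v_i]\|}.$$
Decomposing $hv_i=(hv_i)_{\mathfrak{a}}+(hv_i)_{\mathcal{F}^\perp}$ and $\mathrm{Ad}(h)u\in\mathfrak{k}$ along the root-space decomposition of $\mathfrak{k}$, one sees via Proposition \ref{prop:Lie algebra action} that $[\mathrm{Ad}(h)u,(hv_i)_{\mathfrak{a}}]$ lies in $\bigoplus_\beta\mathfrak{p}_\beta\subseteq\mathcal{F}^\perp$, while $[\mathrm{Ad}(h)u,(hv_i)_{\mathcal{F}^\perp}]$ has norm at most a universal constant times $\sin(\angle(hv_i,\mathcal{F}))$. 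Dividing by $\|[u,v_i]\|$ and using that $h$ is an isometry of $\mathfrak{p}$ yields
$$\sin\bigl(\angle(hv_i(\alpha,u),\mathcal{F}^\perp)\bigr)\;\leq\;\frac{C_0}{|\alpha(\bar v_i)|}\,\sin(\angle(hv_i,\mathcal{F})),$$
which is the required bound whenever $|\alpha(\bar v_i)|$ is bounded below by a universal constant.

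The hard part will be the combinatorial selection of roots. Since the $\{\mathfrak{p}_\alpha\}_{\alpha\in\Lambda^+}$ are pairwise orthogonal (remark after Proposition \ref{prop:Eberlein}), choosing distinct roots for different slots automatically gives the required $C'\epsilon$-orthogonality. One must therefore exhibit $r$ distinct positive roots $\alpha_1,\ldots,\alpha_r$ for $v_1$ together with two further distinct roots for each $v_i$ ($i\geq 2$), all distinct across the collection, such that every chosen root $\alpha$ satisfies $|\alpha(\bar v_i)|\geq c(X)>0$ uniformly in the position of $\bar v_i$---most delicately when $\bar v_i$ lies close to a wall of the Weyl chamber. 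A simple dimension count shows this demands $\dim\mathcal{F}^\perp\geq 2k+r-2$: the worst case $k=r$ requires $3r-2$ slots, which fails precisely for the two excluded spaces (one has $\dim\mathcal{F}^\perp=3<4$ for $\mathrm{SL}(3,\mathbb{R})/\mathrm{SO}(3)$, and $6<7$ for $\mathrm{SL}(4,\mathbb{R})/\mathrm{SO}(4)$). For all remaining irreducible symmetric spaces of rank $\geq 2$, the existence of such a selection is to be established by case-by-case analysis of the root system together with a stratification of $\mathfrak{a}$ by proximity to the Weyl walls; this combinatorial argument is precisely the content of Section \ref{sec:combinatorial-problem}.
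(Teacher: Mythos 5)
Your proposal is correct and takes a genuinely different route from the paper's proof. The paper first moves the given frame into $\mathcal F$ by writing the element of $K$ that transports $v_i$ to $\mathcal F$ as a product $\hat k_i k_i$ with $k_i$ small and $\hat k_i\in K_{v_i^*}$, then applies Lemma \ref{lem:pick frame} to pick auxiliary vectors from a fixed orthonormal frame on $\mathcal F^\perp$ adapted to the $\mathfrak p_\alpha$-decomposition, and finally translates back by $k_i^{-1}$; the key angle inequality (Lemma \ref{lem:angle inequality}) is proved there by decomposing $h=h_0h_1$ with $h_0\in K_{v^*}$ and estimating $||h_1||$ infinitesimally. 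You instead construct the auxiliary vectors \emph{intrinsically} as normalized brackets $[u,v_i]/\|[u,v_i]\|$ with $u\in\mathfrak k_\alpha$, and deduce the angle inequality directly from $\mathrm{Ad}(h)[u,v_i]=[\mathrm{Ad}(h)u,hv_i]$ together with the observation that $[\mathfrak k,\mathfrak a]\subseteq\mathcal F^\perp$. This is slicker: it avoids the move-to-$\mathcal F$-and-translate-back dance entirely and replaces the infinitesimal $h=h_0h_1$ argument with a one-line equivariance computation. Both routes funnel into the same combinatorial bottleneck (the Hall's marriage argument and Lemma \ref{lem:dimension-estimate}), which you appropriately defer.

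Two points where your sketch should be tightened. First, when a chosen root $\alpha$ has $\dim\mathfrak p_\alpha>1$ you may need two or more slots in the same root space; picking orthonormal $u_1,u_2\in\mathfrak k_\alpha$ works, but you should record that $(I-\theta)(I+\theta)^{-1}:\mathfrak k_\alpha\to\mathfrak p_\alpha$ is in fact an isometry (because $\mathfrak g_\alpha\perp\mathfrak g_{-\alpha}$), so orthogonality of the $u$'s propagates to the $\mathfrak p_\alpha$-components of the bracket vectors. Second, the promised uniform lower bound $|\alpha(\bar v_i)|\geq c(X)>0$ does not come for free from ``a stratification by proximity to the Weyl walls'': you need the paper's device of replacing $\bar v_i$ by a maximally singular vector $v_i^*$ in a fixed $\rho$-neighborhood and restricting to roots with $\alpha(v_i^*)\neq 0$, which forces $|\alpha(\bar v_i)|$ uniformly away from zero once $\rho$ is fixed. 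Also, calling the remaining combinatorics ``case-by-case analysis of the root system'' mischaracterizes Lemma \ref{lem:dimension-estimate}: that lemma is a uniform counting argument using irreducibility (the claim $t_i-t_{i-1}\geq i$), and case analysis appears only in the two small edge cases $r=k\in\{2,3\}$ that produce the excluded symmetric spaces.
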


 The proof of Theorem \ref{thm:weak eigenvalue matching} will be delayed to Section \ref{sec:combinatorial-problem}.

\subsection{Proof of Theorem \ref{thm:bound-ratio}}\label{subsec:proof-theorem-bounded-ratio}
In this section, we will prove Theorem \ref{thm:bound-ratio} using Theorem \ref{thm:weak eigenvalue matching}. Before starting the proof, we will need the following three elementary results from linear algebra.

\begin{lem}\label{lem:compare-eigvals}
Let $Q$ be a positive definite quadratic form on some Euclidean space $V$ of dimension $n$, with eigenvalues $\lambda_1\leq \lambda_2\leq...\leq \lambda_n$. Let $W\subseteq V$ be a subspace of codimension $l$, and let $\mu_1\leq \mu_2\leq...\leq\mu_{n-l}$ be the eigenvalues of $Q$ restricted to $W$. Then $\lambda_i\leq\mu_i\leq \lambda_{i+l}$ holds for $i=1,\ldots ,n-l$.
\end{lem}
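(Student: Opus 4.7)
The plan is to invoke the Courant--Fischer min-max characterization of eigenvalues of a positive definite quadratic form. Recall that for such $Q$ on $V$ with ordered eigenvalues $\lambda_1 \leq \cdots \leq \lambda_n$, one has
$$\lambda_i \;=\; \min_{\substack{U \subseteq V \\ \dim U = i}} \; \max_{\substack{v \in U \\ \|v\|=1}} Q(v,v),$$
and the analogous formula applies to $\mu_i$ with the minimum taken over $i$-dimensional subspaces of $W$ instead.

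The lower bound $\lambda_i \leq \mu_i$ follows immediately: every $i$-dimensional subspace of $W$ is also an $i$-dimensional subspace of $V$, so the minimum defining $\mu_i$ is taken over a subfamily of the subspaces appearing in the min-max for $\lambda_i$, which can only make the value larger.

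For the upper bound $\mu_i \leq \lambda_{i+l}$, the key ingredient is a dimension count. Choose an $(i+l)$-dimensional subspace $U \subseteq V$ realizing the minimum for $\lambda_{i+l}$, so that $Q(v,v) \leq \lambda_{i+l}$ for every unit $v \in U$. Since $W$ has codimension $l$ in $V$,
$$\dim(U \cap W) \;\geq\; \dim U + \dim W - \dim V \;=\; (i+l) + (n-l) - n \;=\; i.$$
Pick any $i$-dimensional subspace $U' \subseteq U \cap W$; it lies in $W$ and inherits the bound $Q(v,v) \leq \lambda_{i+l}$ on its unit sphere. Taking the minimum in the Courant--Fischer formula for $\mu_i$ over such $U'$ yields $\mu_i \leq \lambda_{i+l}$.

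There is no real obstacle here; this is the classical Cauchy interlacing inequality, and the argument is entirely standard. The only thing to be slightly careful about is that the min-max formula should be applied in $V$ (resp.\ $W$) with its full ambient inner product, and that positive definiteness of $Q$ ensures the eigenvalues and their orderings are well defined both globally and after restriction.
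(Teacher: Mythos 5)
Your proof is correct and is essentially the same as the paper's: both rest on the same dimension count ($\dim(U\cap W)\geq \dim U + \dim W - \dim V$) applied to a low-eigenvalue subspace of $V$ and the subspace $W$. The only cosmetic difference is that you package the argument via the Courant--Fischer min-max characterization, while the paper reproves the relevant piece from scratch as a contradiction argument with explicitly chosen eigenspaces.
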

\begin{proof} We argue by contradiction. Assume $\mu_i> \lambda_{i+l}$ for some $i$. Take the subspace $W_0\subseteq W$ spanned by the eigenvectors corresponding to $\mu_i, \mu_{i+1},\ldots ,\mu_{n-l}$; clearly $\dim(W_0) = n-l-i+1$. So for any nonzero vectors $v\in W_0$, we have $Q(v,v)\geq \mu_i\| v\|^2>\lambda_{i+l}\| v\|^2$. However, if we denote $V_0\subseteq V$ the $(i+l)$-dimensional subspace spanned by the eigenvectors corresponding to $\lambda_1,\ldots,\lambda_{i+l}$, we have $Q(v,v)\leq \lambda_{i+l}\| v\|^2$ for any $v\in V_0$. But $\dim(W_0\cap V_0)\geq \dim(W_0)+\dim(V_0)-\dim(V)=1$ implies $W_0\cap V_0$ is nontrivial, so we obtain a contradiction. This establishes $\mu_i\leq \lambda_{i+l}$. A similar argument shows $\lambda_i\leq\mu_i$.
\end{proof}

\begin{lem}\label{lem:using-other-bases}
Let $Q$ be a positive definite quadratic form on some Euclidean space $V$ of dimension $n$, with eigenvalues $\lambda_1\leq \lambda_2\leq \cdots \leq \lambda_n$. If $\{v_1,\ldots ,v_n\}$ is any orthonormal frame of $V$, ordered so that $Q(v_1,v_1)\leq Q(v_2,v_2)\leq \cdots \leq Q(v_n,v_n)$, then $Q(v_i,v_i)\geq \lambda_i/n$ for $i=1,\ldots ,n$.
\end{lem}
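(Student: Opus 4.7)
The plan is to combine the Courant--Fischer min-max characterization of eigenvalues with a Cauchy--Schwarz estimate on the bilinear form associated to $Q$. Concretely, I would first test the min-max formula
$$\lambda_i \;=\; \min_{\dim W = i}\;\max_{\substack{v\in W \\ \|v\|=1}} Q(v,v)$$
against the specific $i$-dimensional test subspace $W := \text{span}(v_1,\ldots,v_i)$, which immediately gives the one-sided bound $\lambda_i \le \max_{v\in W,\,\|v\|=1} Q(v,v)$. So everything reduces to bounding $Q(v,v)$ from above on unit vectors of $W$ in terms of $Q(v_i,v_i)$.

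For the second step, I would expand an arbitrary unit vector $v = \sum_{j=1}^{i} c_j v_j \in W$ and estimate
$$Q(v,v) \;=\; \sum_{j,k=1}^{i} c_j c_k\, Q(v_j,v_k) \;\le\; \sum_{j,k=1}^{i} |c_j|\,|c_k|\, Q(v_j,v_j)^{1/2} Q(v_k,v_k)^{1/2},$$
using the Cauchy--Schwarz inequality for the positive definite form $Q$ entry-wise. Since the basis is ordered so that $Q(v_j,v_j)\le Q(v_i,v_i)$ for every $j\le i$, this collapses to
$$Q(v,v) \;\le\; Q(v_i,v_i)\left(\sum_{j=1}^{i}|c_j|\right)^{\!2} \;\le\; i\,Q(v_i,v_i)\sum_{j=1}^{i}c_j^2 \;=\; i\,Q(v_i,v_i),$$
where the last inequality is another application of Cauchy--Schwarz (or equivalently the power-mean inequality) using $\sum c_j^2 = 1$. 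Combining with the min-max estimate gives $\lambda_i \le i\cdot Q(v_i,v_i) \le n\cdot Q(v_i,v_i)$, which rearranges to the desired inequality $Q(v_i,v_i) \ge \lambda_i/n$.

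There is no real obstacle here: both tools, min-max and Cauchy--Schwarz, are standard, and the orthonormality of $\{v_1,\ldots,v_n\}$ is used only to ensure that $\sum c_j^2 = \|v\|^2$ so the test vector is correctly normalized. The proof in fact yields the slightly sharper bound $Q(v_i,v_i)\ge \lambda_i/i$, but the stated form with $\lambda_i/n$ is all that is needed in the sequel.
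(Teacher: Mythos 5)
Your argument is correct, but it follows a genuinely different path from the paper's. The paper proves Lemma \ref{lem:using-other-bases} by induction on $\dim V$: it restricts $Q$ to $W=\mathrm{span}(v_1,\dots,v_{n-1})$, applies the inductive hypothesis to get $Q(v_i,v_i)\ge\mu_i/(n-1)$ for $i\le n-1$, invokes the interlacing estimate of Lemma \ref{lem:compare-eigvals} to replace $\mu_i$ by $\lambda_i$, and then handles the top index $i=n$ separately via the trace identity $Q(v_n,v_n)\ge\frac{1}{n}\sum_j Q(v_j,v_j)=\frac{1}{n}\sum_j\lambda_j\ge\lambda_n/n$. You instead avoid induction entirely by testing the Courant--Fischer min-max formula on the span of $v_1,\dots,v_i$ and then controlling $Q$ on that subspace with two applications of Cauchy--Schwarz. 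Both arguments are elementary, but yours is more direct and, as you observe, yields the sharper constant $\lambda_i/i$ uniformly in $i$ (the paper's inductive scheme also gives $\lambda_i/i$ for $i<n$ if one tracks constants, but the trace step for $i=n$ only gives $\lambda_n/n$, so the statement with $1/n$ is what is convenient to propagate). The paper's route has the organizational virtue of reusing Lemma \ref{lem:compare-eigvals}, which is already established and applied repeatedly in the proof of Theorem \ref{thm:bound-ratio}, whereas your route brings in an additional (though entirely standard) tool.
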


\begin{proof} We show this by induction on the dimension of $V$. The statement is clear when $n=1$, so let us now assume we have the statement for $\dim(V)=n-1$. Now if $\dim(V)=n$, we restrict the quadratic form $Q$ to the $(n-1)$-dimensional subspace $W$ spanned by $v_1,\ldots ,v_{n-1}$, and denote the eigenvalues of $Q|_W$ by $\mu_1\leq \mu_2\leq...\leq\mu_{n-1}$. By the induction hypothesis and Lemma \ref{lem:compare-eigvals}, we obtain
$$Q(v_i,v_i)\geq \frac{\mu_i}{n-1} \geq \frac{\lambda_i}{n-1} \geq \frac{\lambda_i}{n}$$
for $1\leq i\leq n-1$. Finally, for the last vector, we have
$$Q(v_n,v_n)\geq \frac{Q(v_1,v_1)+...+Q(v_n,v_n)}{n}=\frac{tr(Q)}{n}=\frac{\lambda_1+...+\lambda_n}{n}\geq \frac{\lambda_n}{n}$$
This completes the proof of the lemma.
\end{proof}

\begin{lem}\label{lem:Gram-Schmidt}
Let $Q$ be a positive definite quadratic form on some Euclidean space $V$ of dimension $n$. If $\{v_1,...,v_k\}$ is any $\tau$-orthonormal $k$-frame for
$\tau$ sufficiently small (only depends on n), ordered so that $Q(v_1,v_1)\leq...\leq Q(v_k,v_k)$, then there is an orthonormal $k$-frame $\{u_1,...,u_k\}$
such that $Q(u_i,u_i)\leq 2Q(v_i,v_i)$.
\end{lem}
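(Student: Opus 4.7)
The natural approach is to apply the (ordered) Gram--Schmidt orthonormalization procedure to $\{v_1,\dots,v_k\}$, producing an orthonormal frame $\{u_1,\dots,u_k\}$ with $u_i\in\mathrm{span}(v_1,\dots,v_i)$. Write $u_i=\sum_{j\le i}c_{ij}v_j$, so that the coefficient matrix $C=(c_{ij})$ is lower triangular with positive diagonal, and $CGC^T=I$ where $G_{jl}=\langle v_j,v_l\rangle$ is the Gram matrix of the $v_j$'s. The key algebraic input is the Cauchy--Schwarz inequality for the positive semi-definite form $Q$, namely $|Q(v_j,v_l)|\le Q(v_j,v_j)^{1/2}Q(v_l,v_l)^{1/2}$. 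Expanding $Q(u_i,u_i)$, applying this bound, and using the ordering $Q(v_j,v_j)\le Q(v_i,v_i)$ for $j\le i$ yields
\[
Q(u_i,u_i)=\sum_{j,l\le i}c_{ij}c_{il}Q(v_j,v_l)\;\le\;\Bigl(\sum_{j\le i}|c_{ij}|Q(v_j,v_j)^{1/2}\Bigr)^{\!2}\;\le\;Q(v_i,v_i)\Bigl(\sum_{j\le i}|c_{ij}|\Bigr)^{\!2}.
\]
Thus everything reduces to showing that, for $\tau$ sufficiently small (depending only on $n$), one has $\sum_{j\le i}|c_{ij}|\le\sqrt{2}$ for every $i\le k\le n$.

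For this, observe that the $\tau$-orthonormality hypothesis means $G=I+E$ where $E$ has zero diagonal and off-diagonal entries of absolute value less than $\tau$, so $\|E\|\le n\tau$ in operator norm. The Cholesky-type factorization $CGC^T=I$ with $C$ lower triangular and positive on the diagonal expresses $C$ as a continuous function of $G$ on the open cone of positive definite symmetric matrices, with $C=I$ when $G=I$. Hence, as $\tau\to 0$, $C\to I$ uniformly — with a modulus of continuity depending only on $k\le n$. Concretely, one can verify inductively on $i$ that $|c_{ii}-1|$ and $|c_{ij}|$ for $j<i$ are bounded by a constant depending only on $n$ times $\tau$: once $u_1,\dots,u_{i-1}$ have been built with $u_j=v_j+O(\tau)$, each inner product $\langle v_i,u_j\rangle$ is $O(\tau)$, so the unnormalized vector $\tilde u_i=v_i-\sum_{j<i}\langle v_i,u_j\rangle u_j$ differs from $v_i$ by $O(\tau)$, has norm within $O(\tau)$ of $1$, and normalization only perturbs the coefficients by a further $O(\tau)$.

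Combining these two ingredients: for $\tau$ small enough (an explicit threshold depending only on $n$), $\sum_{j\le i}|c_{ij}|\le\sqrt{2}$, so $Q(u_i,u_i)\le 2\,Q(v_i,v_i)$ for every $i$, as required. The only ``hard'' step here is the uniform bookkeeping in the inductive estimate of the Gram--Schmidt coefficients, but this is entirely routine once the near-orthonormality of the input is given; there are no genuine conceptual obstacles.
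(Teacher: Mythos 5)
Your proof is correct and follows essentially the same route as the paper: apply Gram--Schmidt to $\{v_i\}$, observe that the resulting coefficient matrix is a small perturbation of the identity (with a modulus depending only on $n$), and use the Cauchy--Schwarz inequality for $Q$ together with the ordering of the $Q(v_i,v_i)$ to bound $Q(u_i,u_i)$ by $(1+O(\tau))Q(v_i,v_i)\le 2Q(v_i,v_i)$. Your write-up is somewhat more explicit about the Cholesky-type factorization $CGC^T=I$ and the inductive control of the coefficients, but there is no substantive difference from the paper's argument.
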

\begin{proof} We do the Gram-Schmidt process on $\{v_1,...,v_k\}$ and obtain an orthonormal $k$-frame $\{u_1,...,u_k\}$.
Notice $\{v_1,...,v_k\}$ is $\tau$-orthonormal, so we have $u_i=v_i+O(\tau)v_1+...+O(\tau)v_i$, where by
$O(\tau)$ we denote a number that has universal bounded (only depends on $n$) ratio with $\tau$. This implies
$$Q(u_i,u_i)=Q(v_i,v_i)+O(\tau)\sum_{1\leq s\leq t\leq i}Q(v_s,v_t)$$
Since $\abs{Q(v_s,v_t)}\leq \sqrt{Q(v_s,v_s)Q(v_t,v_t)}\leq Q(v_i,v_i)$, we obtain
$$Q(u_i,u_i)\leq Q(v_i,v_i)+O(\tau)Q(v_i,v_i)\leq 2Q(v_i,v_i)$$
for $\tau$ sufficiently small. This completes the proof of the lemma.
\end{proof}

We are now ready to establish Theorem \ref{thm:bound-ratio}.

\begin{proof}
As was shown in \cite[Section 4.4]{CF1}, for any fixed $\epsilon_0 \leq 1/(r+1)$, there are at most $r$ eigenvalues of $Q_2$
that are smaller than $\epsilon_0$ (we will choose $\epsilon_0$ in the course of the proof).
By Lemma \ref{lem:compare-eigvals} the same is true for $Q_2|_S$. We arrange
these small eigenvalues in the order $L_1\leq L_2\leq\ldots \leq L_k$, where $k\leq r$. Observe that, if no such
eigenvalue exists, then by Lemma \ref{lem:compare-eigvals}, $\det(Q_2 |_S)$ is uniformly bounded below, and the
theorem holds (since the eigenvalues of $Q_1|_S$ are all $\leq 1$).
So we will henceforth assume $k\geq 1$.
We denote the corresponding unit eigenvectors by $v_1,...,v_k$ (so that $v_i$ has eigenvalue $L_i$).
Although $V=span\{v_1,...v_k\}$ might not have small angle with $\mathcal{F}$, it is shown in \cite[Section 3]{CF2} that
there is a $k_0\in K$ so that $\angle(k_0v_i,\mathcal{F})\leq 2\epsilon_0^{1/4}$ for each $i$.

Let $\epsilon$ be a constant small enough so that $\epsilon<\delta$, where $\delta$ is from Theorem \ref{thm:weak eigenvalue matching},
and also $\tau:=C'\epsilon$ satisfies the condition of
Lemma \ref{lem:Gram-Schmidt}  (where $C'$ is obtained from Theorem \ref{thm:weak eigenvalue matching}). Hence the choice of $\epsilon$ only
depends on $X$. We now make a choice of $\epsilon_0$ such that $2\epsilon_0^{1/4}<\epsilon$, and hence
$\angle(k_0V,\mathcal{F})< \epsilon$. (Note again the choice of $\epsilon_0$ only depends on $X$.)

Apply Theorem \ref{thm:weak eigenvalue matching} to the frame $\{k_0v_1, \ldots , k_0v_k\}$, and translate the
obtained $(C'\epsilon)$-orthonormal frame by $k_0^{-1}$. This gives us a $(C'\epsilon)$-orthonormal $(2k+r-2)$-frame
$\{v_1',v_1'',...,v_1^{(r)},v_2',v_2'',...,v_k',v_k''\}$, such that for $i=1,...,k$, and $j=1,...r$, we have
$$\angle(hv_i',\mathcal{F}^\perp)\leq C\angle(hv_i,\mathcal{F})$$
$$\angle(hv_i'',\mathcal{F}^\perp)\leq C\angle(hv_i,\mathcal{F})$$
$$\angle(hv_1^{(j)},\mathcal{F}^\perp)\leq C\angle(hv_1,\mathcal{F})$$
for all $h\in K$ (note that we have absorbed the $k_0$-translation into the element $h$).

We notice $\angle(hv_i',\mathcal{F}^\perp)\leq C\angle(hv_i,\mathcal{F})$ implies
$\sin^2(\angle(hv_i',\mathcal{F}^\perp))\leq C_0\sin^2(\angle(hv_i,\mathcal{F}))$ for some $C_0$ depending on $C$. For convenience, we still use $C$ for this new constant. Hence, we obtain
$$Q_1(v_i',v_i')\leq\int_{\partial_F X}\sin^2(\angle (O_\theta^\ast v_i',\mathcal{F}^\perp))d\mu(\theta)$$
$$\leq C\int_{\partial_F X}\sin^2(\angle (O_\theta^\ast v_i,\mathcal{F}))d\mu(\theta)=C Q_2(v_i,v_i)=CL_i$$
An identical estimate gives us $Q_1(v_i'',v_i'')\leq CL_i$, and $Q_1\left(v_1^{(j)},v_1^{(j)}\right)\leq CL_1.$

We rearrange the $(C'\epsilon)$-orthonormal $(2k+r-2)$-frame as
$\{u_1',u_1'',...,u_1^{(r)},u_2',u_2'',...,u_k',u_k''\}$ so that it has increasing order when applying $Q_1$.
Then the inequalities still hold for this new frame:
$$Q_1(u_i',u_i')\leq CL_i$$
$$Q_1(u_i'',u_i'')\leq CL_i$$
$$Q_1\left(u_1^{(j)},u_1^{(j)}\right)\leq CL_1$$
Since the choice of $\epsilon$ makes $C'\epsilon$ satisfy the condition of Lemma \ref{lem:Gram-Schmidt}, we apply
the lemma to this $C'\epsilon$-orthonormal frame. This gives us an orthonormal $(2k+r-2)$-frame
$\left \{\overline{u_1'},\overline{u_1''},...,\overline{u_1^{(r)}},\overline{u_2'},\overline{u_2''},...,\overline{u_k'},\overline{u_k''}
\right \}$,
such that
$$Q_1(\overline{u_i'},\overline{u_i'})\leq 2Q_1(u_i',u_i')\leq 2CL_i$$
$$Q_1(\overline{u_i''},\overline{u_i''})\leq 2Q_1(u_i'',u_i'')\leq 2CL_i$$
$$Q_1\left(\overline{u_1^{(j)}},\overline{u_1^{(j)}})\leq 2Q_1(u_1^{(j)},u_1^{(j)}\right)\leq 2CL_1$$
Again, we can rearrange the orthonormal basis to have increasing order when applying $Q_1$, and it is easy
to check that, for the resulting rearranged orthonormal basis, the same inequalities still hold.

We denote the first $(2k+r-2)$ eigenvalues of $Q_1$ by $\lambda_1'\leq\lambda_1''\leq...\leq\lambda_1^{(r)}\leq\lambda_2'\leq \lambda_2''\leq...\leq \lambda_k'\leq\lambda_k''$, and the first $2k$ eigenvalues of $Q_1\mid _S$ by $\mu_1'\leq\mu_1''\leq...\leq\mu_k'\leq\mu_k''$. Applying Lemma \ref{lem:using-other-bases}, we have
$$\lambda_i'\leq nQ_1(\overline{u_i'},\overline{u_i'})\leq 2nCL_i$$
$$\lambda_i''\leq nQ_1(\overline{u_i''},\overline{u_i''})\leq 2nCL_i$$
$$\lambda_1^{(j)}\leq nQ_1\left(\overline{u_1^{(j)}},\overline{u_1^{(j)}}\right)\leq 2nCL_1$$
for $1\leq i\leq k$ and $1\leq j\leq l$.

Notice $\dim(S)\geq n-r+2$. We apply Lemma \ref{lem:compare-eigvals} and obtain
$$\mu_1'\leq \lambda_1^{(r-1)}\leq 2nCL_1$$
$$\mu_1''\leq \lambda_1^{(r)}\leq 2nCL_1$$
$$\mu_i'\leq \lambda_i'\leq 2nCL_i$$
$$\mu_i''\leq \lambda_i''\leq 2nCL_i$$
for $2\leq i\leq k$.
The eigenvalues of $Q_1|_S$ are bounded above by $1$, and $L_1,...,L_k$ are the only eigenvalues of $Q_2\mid_S$ that are below $\epsilon_0$ (and recall the choice of $\epsilon_0$ only depends on $X$). Therefore,
$$\det(Q_1|_S)\leq \prod_{i=1}^k\mu_i'\mu_i''\leq \prod_{i=1}^k(2nCL_i)^2\leq(2nC)^{2k}\left[\frac{\det (Q_2\mid_S)}{\epsilon_0^{\dim(S)-k}}\right]^2$$
$$\leq \overline{C}\det(Q_2\mid_S)^2$$
where $\overline{C}$ only depends on $X$. This completes the proof of Theorem \ref{thm:bound-ratio}.
\end{proof}


\section{Reduction to the Combinatorial Problem}\label{sec:combinatorial-problem}
In this section, we will prove the ``weak eigenvalue matching'' Theorem \ref{thm:weak eigenvalue matching}, which was
introduced in Section \ref{subsec:eigval-matching}. The approach is to follow \cite{CF2}, and reduce the theorem to a
combinatorial problem. Then we apply Hall's Marriage theorem to solve it.

\subsection{Hall's Marriage Theorem}
We introduce the classic Hall's Marriage Theorem, and later on we will apply a slightly stronger version (Corollary
\ref{cor:Hall's marriage} below) in the proof of Lemma \ref{lem:pick frame}.
\begin{thm}[Hall's Marriage Theorem]\label{thm:Hall's marriage}
Suppose we have a set of $m$ different species $A=\{a_1,..,a_m\}$, and a set of $n$ different planets
$B=\{b_1,...,b_n\}$. Let $\phi: A\rightarrow \mathcal{P}(B)$ be a map which sends a species to the set of all
suitable planets for its survival. Then we can arrange for each species a different planet to survive if and only
if for any subset $A_0\subseteq A$, we have the cardinality inequality $|\phi(A_0)|\geq |A_0|$.

\end{thm}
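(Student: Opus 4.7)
The plan is to prove necessity by a one-line observation and sufficiency by strong induction on $m = |A|$. For necessity, any injective assignment $\psi \colon A \to B$ with $\psi(a) \in \phi(a)$ sends each $A_0 \subseteq A$ injectively into $\phi(A_0)$, so $|\phi(A_0)| \geq |A_0|$ at once.

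For sufficiency, the base case $m = 1$ is immediate, since Hall's condition forces $|\phi(\{a_1\})| \geq 1$ and hence provides a planet to assign. For the inductive step with $m \geq 2$, I would split into two cases according to whether some nonempty proper subset $A_0 \subsetneq A$ is \emph{critical}, meaning $|\phi(A_0)| = |A_0|$.

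In the slack case, where $|\phi(A_0)| \geq |A_0| + 1$ for every nonempty proper $A_0 \subsetneq A$, I would pick any $a \in A$, assign it any $b \in \phi(a)$, and apply induction to the reduced instance $(A \setminus \{a\}, \, B \setminus \{b\})$. Removing one planet decreases each $|\phi(A_0)|$ by at most $1$, so Hall's condition is preserved and induction provides the remainder of the matching. In the critical case, where some nonempty $A_0 \subsetneq A$ satisfies $|\phi(A_0)| = |A_0|$, I would first apply induction to $(A_0, \phi|_{A_0})$, whose Hall condition is inherited from the ambient one, to obtain a matching $\psi_0$ of $A_0$ into $\phi(A_0)$. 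Then I would tackle the residual instance $(A \setminus A_0, \, B \setminus \phi(A_0))$ with the restricted map $\phi'(a) := \phi(a) \setminus \phi(A_0)$, and concatenate the two matchings.

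The main obstacle is verifying Hall's condition for the residual instance in the critical case. For any $A_1 \subseteq A \setminus A_0$, the key computation uses $\phi(A_0 \cup A_1) \subseteq \phi(A_0) \cup \phi'(A_1)$ together with the hypothesis $|\phi(A_0 \cup A_1)| \geq |A_0| + |A_1|$ and the critical equality $|\phi(A_0)| = |A_0|$, yielding $|\phi'(A_1)| \geq |A_1|$. Once this is in hand, induction applies to the residual instance and the union $\psi_0 \sqcup \psi_1$ is the desired injective assignment, completing the proof.
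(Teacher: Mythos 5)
The paper does not prove Theorem \ref{thm:Hall's marriage}; it is quoted as a classical result and used as a black box (the paper only proves the derived Corollaries \ref{cor:two-for-one Hall's marriage} and \ref{cor:Hall's marriage}). So there is no in-paper argument to compare against. Your proof is the standard strong-induction argument (usually attributed to Halmos and Vaughan), and it is correct and complete. Necessity is immediate from injectivity. For sufficiency, the slack/critical dichotomy is handled correctly: in the slack case the removal of one planet still leaves $|\phi(A_0)| \geq |A_0|$ for every nonempty $A_0 \subsetneq A$ not containing $a$; in the critical case the key computation is sound, since $\phi(A_0 \cup A_1) = \phi(A_0) \cup \phi'(A_1)$ with the two pieces disjoint, so
\begin{equation*}
|A_0| + |A_1| \leq |\phi(A_0 \cup A_1)| = |\phi(A_0)| + |\phi'(A_1)| = |A_0| + |\phi'(A_1)|,
\end{equation*}
giving $|\phi'(A_1)| \geq |A_1|$, and the two sub-matchings live in the disjoint planet sets $\phi(A_0)$ and $B \setminus \phi(A_0)$, so their union is injective. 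Both inductive calls are on strictly smaller sets ($|A_0| < m$ and $m - |A_0| < m$ since $A_0$ is nonempty and proper), so the induction is well-founded. No gaps.
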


\begin{cor}\label{cor:two-for-one Hall's marriage}
Under the assumption of Theorem \ref{thm:Hall's marriage}, we can arrange for each species two different planets
if and only if for any subset $A_0\subseteq A$, we have the cardinality inequality $|\phi(A_0)|\geq 2|A_0|$.
\end{cor}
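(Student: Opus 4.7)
The plan is to deduce Corollary \ref{cor:two-for-one Hall's marriage} directly from Theorem \ref{thm:Hall's marriage} by a standard ``doubling'' trick. The necessity direction is immediate: if each species $a_i\in A_0$ can be paired with two distinct planets and the chosen planets are all distinct across species, then the union of the $2|A_0|$ chosen planets lies inside $\phi(A_0)$, so $|\phi(A_0)|\geq 2|A_0|$.

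For the sufficiency direction, I would introduce the duplicated species set $A'=\{a_1^{(1)},a_1^{(2)},a_2^{(1)},a_2^{(2)},\ldots,a_m^{(1)},a_m^{(2)}\}$ of size $2m$, together with the map $\phi':A'\to\mathcal{P}(B)$ defined by $\phi'(a_i^{(j)})=\phi(a_i)$ for $j=1,2$. A matching for $(A',\phi')$ provided by the classical Hall's Marriage Theorem assigns two distinct planets to each original species $a_i$, which is exactly the conclusion we want.

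Thus it suffices to verify that the hypothesis $|\phi(A_0)|\geq 2|A_0|$ for all $A_0\subseteq A$ implies the usual Hall condition $|\phi'(A_0')|\geq |A_0'|$ for all $A_0'\subseteq A'$. Given such an $A_0'$, I would project to $A$ by setting $A_0=\{a_i\in A:\text{there exists }j\text{ with }a_i^{(j)}\in A_0'\}$. By construction $\phi'(A_0')=\phi(A_0)$, and $|A_0'|\leq 2|A_0|$ since each $a_i\in A_0$ contributes at most two elements to $A_0'$. Therefore
\[
|\phi'(A_0')|=|\phi(A_0)|\geq 2|A_0|\geq |A_0'|,
\]
which is precisely Hall's condition for $(A',\phi')$.

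There is essentially no obstacle here; the whole content is packaging the hypothesis correctly so that the classical theorem applies, and the doubling trick makes the translation transparent. The corollary itself is the statement actually used in the proof of Lemma \ref{lem:pick frame}, where the planets play the role of a combinatorial structure from which pairs must be extracted for each species.
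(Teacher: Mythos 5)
Your proof is correct and takes essentially the same approach as the paper: both use the doubling trick (forming a duplicated species set and applying the classical Hall's Marriage Theorem), and both verify Hall's condition for the doubled system by projecting an arbitrary subset down to $A$ and using $|A_0'|\le 2|A_0|$. Your write-up of the verification step is a touch more uniform than the paper's (which splits the subset as $H\cup K'$ with $H\subseteq A$, $K'\subseteq A'$), but the underlying argument is identical.
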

\begin{proof}
Assume there exists such arrangement, the cardinality condition holds obviously. On the other hand, assume we
have the cardinality condition, we want to show there is an arrangement. We make an identical copy on each species
and form the set $A'=\{a_1',...,a_m'\}$. We apply the Hall's Marriage Theorem to the set $A\cup A'$ relative to $B$.
Then for each $i$, both species $a_i$ and $a_i'$ have its own planet, and that means there are two planets for the
original species $a_i$.

To see why the cardinality condition holds, we choose an arbitrary subset $H\cup K'\subseteq A\cup A'$ where
$H \subseteq A$ and $K'\subseteq A'$. Let $K$ be the corresponding identical copy of $K'$ in $A$. We have
$\phi(H\cup K')=\phi(H\cup K)\geq 2|H\cup K|\geq |H|+|K|=|H\cup K'|$. This completes the proof.
\end{proof}

\begin{cor}\label{cor:Hall's marriage}
Suppose we have a set of vectors $V=\{v_1,...,v_r\}$, and for each $v_i$, the selectable set is denoted by
$B_i\subseteq B$. If for any subset $V_0=\{v_{i_1},...,v_{i_k}\}\subseteq V$, we have
$|B_{i_1}\cup...\cup B_{i_k}|\geq 2k+r-2$, then we can pick $(3r-2)$ distinct element
$\left \{b_1',...,b_1^{(r)},b_i',b_i''\;(2\leq i\leq r)\right \}$ in $B$ such that $b_1',...,b_1^{(r)}\in B_1$ and $b_i',b_i''\in B_i$.
\end{cor}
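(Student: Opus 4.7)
The plan is to reduce this directly to the classical Hall's Marriage Theorem (Theorem \ref{thm:Hall's marriage}) by artificially inflating the species side. Introduce an augmented set of species $A$ consisting of $r$ copies of $v_1$, labelled $v_1^{(1)}, \ldots, v_1^{(r)}$, together with two copies of each $v_i$ for $i = 2, \ldots, r$, labelled $v_i^{(1)}, v_i^{(2)}$. Let every copy of $v_i$ inherit the selectable set $B_i$. Then $|A| = r + 2(r-1) = 3r-2$, and a Hall matching from $A$ into $B$ produces exactly $r$ distinct elements in $B_1$ (to be relabelled $b_1', \ldots, b_1^{(r)}$) together with two distinct elements of $B_i$ for each $i \geq 2$ (to be relabelled $b_i', b_i''$), all mutually distinct.

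It remains to verify the Hall condition $|\phi(S)| \geq |S|$ for every $S \subseteq A$. Let $I \subseteq \{1, \ldots, r\}$ be the set of indices $i$ such that $S$ contains at least one copy of $v_i$, and set $k = |I|$. Then $\phi(S) = \bigcup_{i \in I} B_i$, and the hypothesis of the corollary gives $|\phi(S)| \geq 2k + r - 2$. On the species side, the number of copies of $v_1$ in $S$ is at most $r$ and the number of copies of $v_i$ in $S$ (for $i \geq 2$) is at most $2$, so
\begin{equation*}
|S| \leq \begin{cases} r + 2(k-1) = 2k + r - 2 & \text{if } 1 \in I, \\ 2k \leq 2k + r - 2 & \text{if } 1 \notin I, \end{cases}
\end{equation*}
where in the second case we use $r \geq 2$ (if $r = 1$ the second case does not arise). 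Either way, $|S| \leq 2k + r - 2 \leq |\phi(S)|$.

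The Hall condition holds on every subset of $A$, so Theorem \ref{thm:Hall's marriage} provides an injective assignment of a planet to each species, which is precisely the $(3r-2)$-tuple of distinct elements required by the corollary. There is no real obstacle here; the only point to be careful about is bookkeeping — the hypothesis bound $2k + r - 2$ is designed to exactly accommodate the worst case in which $S$ uses all $r$ copies of $v_1$ together with both copies of each $v_i$ for $i \in I \setminus \{1\}$, so one must check that the Hall inequality is tight in this case and loose otherwise, which is exactly what the two-case analysis above confirms.
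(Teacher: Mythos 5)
Your proof is correct, and it is organized somewhat differently from the paper's. The paper proceeds in two stages: it first uses the hypothesis with $V_0 = \{v_1\}$ to greedily select $r-2$ elements $b_1^{(3)},\ldots,b_1^{(r)}$ from $B_1$, then applies Corollary \ref{cor:two-for-one Hall's marriage} (the ``two planets per species'' variant, itself proved by the duplication trick) to $V$ relative to the depleted ground set $B \setminus \{b_1^{(3)},\ldots,b_1^{(r)}\}$, verifying that removing at most $r-2$ elements still leaves $|\phi(V_0)| \geq 2k$ for every $V_0$. You instead fold everything into a single application of the classical Hall's Marriage Theorem by building an asymmetric multiset of species ($r$ copies of $v_1$, two copies of each $v_i$ with $i\geq 2$) and checking the Hall condition directly, with the two-case analysis on whether $1 \in I$. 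The net content and the key numerical inequality are identical, and your version is arguably a touch cleaner since it bypasses the intermediate Corollary \ref{cor:two-for-one Hall's marriage}; the paper's version is more modular in that the two-for-one corollary is stated separately. One small point: your Hall verification implicitly treats $S = \emptyset$ (where $k=0$ and the hypothesis as literally stated would read $0 \geq r-2$), but that case is of course vacuous since $|S|=0$; it costs nothing to say so explicitly.
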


\begin{proof}
First we choose $V_0$ the singleton set that consists of only $v_1$. By hypothesis, we have $|B_1|\geq r\geq (r-2)$,
hence we are able to choose $(r-2)$ elements $b_1^{(3)},...,b_1^{(r)}$ for $v_1$. Next we can easily check the cardinality
condition and apply Corollary \ref{cor:two-for-one Hall's marriage} to the set $V$ with respect to
$B\setminus \left \{b_1^{(3)},...,b_1^{(r)}\right \}$ to obtain the pairs $\{b_i',b_i''\}$ (for each $1\leq i\leq r$).
This completes the proof of this corollary.
\end{proof}

\subsection{Angle inequality}
Throughout this section, we will work exclusively with {\em unit vectors} in $T_xX\simeq \mathfrak{p}$. We embed the point
stabilizer $K_x$ into $\Isom(T_xX)\simeq O(n)$, and endow it with the induced metric. This gives rise to a norm on $K$,
defined by $||k||=\max_{v\in T_xX}\angle{(v,kv)},\forall k\in K$. We denote the Lie algebra of $K_x\simeq K$ by $\mathfrak{k}$,
which has root space decomposition $\mathfrak{k}= \mathfrak{k}_0 \oplus \bigoplus_{\alpha\in \Lambda^+}\mathfrak{k}_\alpha$.
For each small element $k\in K$, the action on a vector $v$ can be approximated by the Lie algebra action, that is, if
$k=\exp(u)$ is small, then $||[u,v]||\approx ||kv-v||\sim \angle{(v,kv)}$, where we write $A\sim B$ if $A/B$ and $B/A$
are both universally bounded. By abuse of notation, we do not distinguish between $||k||$ and $||u||$ inside a very small
neighborhood $\mathcal{U}$ of $0$ inside $\mathfrak{k}$. Although $||\cdot||$ is not linear on $\mathcal{U}$, it is linear up
to a universal constant, that is, $||tu||\sim t||u||$, for all $u\in \mathcal{U}$ and $t$ such that $tu\in \mathcal{U}$.  We
now show the following lemmas.

\begin{lem} \emph{(Compare \cite[Lemma 1.1]{CF2}\label{lem:angle inequality})}
Let $X=G/K$ be a rank $r\geq 2$ irreducible symmetric space of non-compact type, and fix a flat
$\mathcal{F}\subseteq T_xX$ at $x$. Then for any small $\rho>0$, there is a constant $C(\rho)$
with the following property. If $v\in \mathcal{F}$ is arbitrary, and $v^*\in \mathcal{F}$ is a maximally singular vector in
the $\rho$-neighborhood of $v$ (in the sense that the dimension of $K_{v^*}$ is as large as possible), then
$$\angle(hu,\mathcal{F}^\perp)\leq C\angle(hv,\mathcal{F})$$
for any $h\in K$, and $u\in (K_{v^*}\mathcal{F})^\perp\simeq
\bigoplus_{\alpha\in \Lambda^+,\alpha(v^*)\neq 0}\mathfrak{p}_\alpha$,
where $\Lambda^+$ is the set of all positive roots. Moreover, we have
$$\angle(hu,\mathcal{F}^\perp)\leq C\angle(hk_0v,K_{v^*}\mathcal{F})$$
for any $h\in K$, $u\in (K_{v^*}\mathcal{F})^\perp$, and $k_0\in K_{v^*}$.
\end{lem}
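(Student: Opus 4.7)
The plan is to follow the strategy of \cite[Lemma 1.1]{CF2} closely, with the essential modification that the role played there by $K_v$ is taken over by $K_{v^*}$, and the essential new input becomes the uniform lower bound $|\alpha(v)|\gtrsim |\alpha(v^*)|$, valid whenever $\alpha(v^*)\neq 0$, provided $\rho$ is chosen small enough relative to the root system of $X$. By compactness of $K$ and continuity of both sides in $h$, it suffices to verify the inequality in the regime where $\angle(hv,\mathcal{F})$ is smaller than some absolute constant $\epsilon_0 > 0$ depending only on $X$; outside this regime the inequality is immediate for $C\geq \pi/(2\epsilon_0)$. Inside this regime, $hv$ lies near the Weyl orbit $W\cdot v\subseteq\mathcal{F}$, so a slice argument at the submanifold $\{h'\in K: h'v\in\mathcal{F}\}$ produces a decomposition $h=h_0\cdot\exp(s)$ with $h_0=\tilde w\cdot k$, $\tilde w\in N_K(\mathfrak{a})$, $k\in K_v$, and $\|s\|\lesssim \angle(hv,\mathcal{F})$; in particular $h_0 v=wv\in \mathcal{F}$.

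The key structural step is that $h_0 u\in\mathcal{F}^\perp$. Since $v^*$ is at least as singular as $v$, we have $\mathfrak{k}_v\subseteq \mathfrak{k}_{v^*}$ and hence $k\in K_{v^*}$; the subspace $(K_{v^*}\mathcal{F})^\perp$ is $K_{v^*}$-invariant by construction and is contained in $\mathcal{F}^\perp$ because $\mathcal{F}\subseteq K_{v^*}\mathcal{F}$, so $ku\in(K_{v^*}\mathcal{F})^\perp\subseteq \mathcal{F}^\perp$; and $\tilde w$ preserves $\mathcal{F}^\perp$ because it preserves $\mathfrak{a}$. Consequently $P(h_0 u)=0$, where $P$ denotes the orthogonal projection $\mathfrak{p}\to\mathfrak{a}$. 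Expanding $h=h_0\exp(s)$ to first order in $s$ then yields $\|P(hu)\|=\|P(h_0[s,u])\|+O(\|s\|^2)$ and $\|P^\perp(hv)\|=\|P^\perp(h_0[s,v])\|+O(\|s\|^2)$. Decomposing $s=s_0+\sum_\alpha s_\alpha$ along the $\mathfrak{k}$-root spaces, Proposition \ref{prop:Lie algebra action} gives $[s_\alpha,v]=-\alpha(v)(I-\theta)(I+\theta)^{-1}s_\alpha\in \mathfrak{p}_\alpha$ of norm $\sim |\alpha(v)|\|s_\alpha\|$; a parallel root-space bracket calculation shows that for $u\in\mathfrak{p}_\beta$ the $\mathfrak{a}$-component of $[s_\alpha,u]$ is nonzero only when $\alpha=\pm\beta$, with magnitude $\sim |\beta(v^*)|\|s_\beta\|\|u\|$. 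Since $u\in(K_{v^*}\mathcal{F})^\perp$ forces every contributing $\beta$ to satisfy $\beta(v^*)\neq 0$, the bound $|\beta(v)|\geq c(\rho)>0$ produces the first-order matching $\|P(h_0[s,u])\|\leq C\|P^\perp(h_0[s,v])\|$.

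Combining this first-order matching with uniform control of higher-order error terms over the compact group $K$ produces the first inequality. The \emph{moreover} refinement, replacing $\angle(hv,\mathcal{F})$ by $\angle(hk_0v,\,K_{v^*}\mathcal{F})$ for any $k_0\in K_{v^*}$, follows from the same framework by absorbing $k_0$ into the structural decomposition: $k_0$ sends $\mathcal{F}$ to a flat $k_0\mathcal{F}\subseteq K_{v^*}\mathcal{F}$ through $v^*$ and preserves $(K_{v^*}\mathcal{F})^\perp$, so the entire argument applies with $\mathcal{F}$ replaced by $k_0\mathcal{F}$ on the right-hand side. The main obstacle to expect is the careful bookkeeping of root-space contributions: one must match each nonzero ``$u$-coordinate'' in $\mathfrak{p}_\beta$ with a ``$v$-coordinate'' coming from $s_\beta$ and verify that the corresponding coefficient $|\beta(v)|$ is uniformly bounded below. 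This is precisely where the hypothesis $u\in(K_{v^*}\mathcal{F})^\perp$ is essential: without it, root spaces $\mathfrak{p}_\beta$ with $\beta(v^*)=0$ (and hence $|\beta(v)|=O(\rho)$) could contribute significantly to $hu$ while contributing negligibly to $hv$, breaking the proportionality.
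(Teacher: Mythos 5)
Your overall strategy is in the spirit of both \cite[Lemma 1.1]{CF2} and the paper's argument, and the introduction of the explicit Weyl-element factor $\tilde w\in N_K(\mathfrak a)$ is a sensible way to handle the case where $hv$ lands near $\mathcal F$ but in a different Weyl chamber. However, your execution diverges from the paper in a way that introduces genuine gaps, and I do not think the proof as written goes through.

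The most serious issue is the slice decomposition $h=(\tilde w k)\exp(s)$ with $k\in K_v$ and $\|s\|\lesssim\angle(hv,\mathcal F)$. The implied constant in this inequality is governed by the smallest nonzero $|\alpha(v)|$ over roots $\alpha$ with $\alpha(v)\neq 0$. Precisely when $v$ is within $\rho$ of a wall (so that $v\neq v^*$), there exist roots $\alpha$ with $\alpha(v^*)=0$ but $0<|\alpha(v)|=O(\rho)$; the corresponding $s_\alpha\in\mathfrak k_\alpha\subseteq\mathfrak k_v^\perp\cap\mathfrak k_{v^*}$ barely move $v$ out of $\mathcal F$, so $\angle(hv,\mathcal F)$ does \emph{not} control $\|s\|$ uniformly. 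This is exactly why the paper's decomposition places $h_0$ in $K_{v^*}$ rather than $K_v$: the ``uncontrolled'' directions in $\mathfrak k_{v^*}\cap\mathfrak k_v^\perp$ must be absorbed into $h_0$, leaving only $s\in\mathfrak k_{v^*}^\perp$, for which the needed lower bound $|\alpha(v)|\geq c(\rho)$ holds for all contributing $\alpha$. Secondly, the first-order matching $\|P(h_0[s,u])\|\leq C\|P^\perp(h_0[s,v])\|$ is asserted but not justified through the $h_0$-factor: $k$ does not preserve $\mathcal F$ or $\mathcal F^\perp$ (it only preserves $K_{v^*}\mathcal F$ and its complement), so $P$ does not commute with $k$ and one must really track the component of $[s,u]$ in all of $K_{v^*}\mathcal F$, not merely the $\mathfrak a$-part. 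Relatedly, the stated magnitude $\sim|\beta(v^*)|\|s_\beta\|\|u\|$ for the $\mathfrak a$-component of $[s_\beta,u]$ has a spurious factor; the correct bound is $\lesssim\|s_\beta\|\|u_\beta\|$ with no dependence on $v^*$, and the factor $|\beta(v^*)|$ enters only on the other side, via $\|[s_\beta,v]\|\sim|\beta(v)|\|s_\beta\|\geq c(\rho)\|s_\beta\|$. Finally, the ``moreover'' part does not follow by ``replacing $\mathcal F$ by $k_0\mathcal F$'': the left-hand side of the desired inequality still involves $\mathcal F^\perp$, not $(k_0\mathcal F)^\perp$, and the right-hand side is $\angle(hk_0v,K_{v^*}\mathcal F)$ which is \emph{smaller} than $\angle(hk_0v,k_0\mathcal F)$, so the substitution goes the wrong way. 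The paper instead conjugates the small factor $h_1$ by $k_0$ and uses $K_{v^*}$-invariance of $\mathfrak k_{v^*}^\perp$ together with norm-preservation under conjugation.

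For comparison: the paper's proof avoids the projection bookkeeping entirely. After writing $h=h_0h_1$ with $h_0\in K_{v^*}$ and $h_1=\exp(w_1)$, $w_1\in\mathfrak k_{v^*}^\perp$, it observes geometrically that $h_1$ moves $v$ \emph{orthogonally} off $K_{v^*}\mathcal F$, giving $\angle(hv,\mathcal F)\geq\angle(h_1v,K_{v^*}\mathcal F)\sim\|h_1\|$, while on the other side $u\in(K_{v^*}\mathcal F)^\perp\subseteq h_0^{-1}\mathcal F^\perp$ immediately gives $\angle(hu,\mathcal F^\perp)\leq\angle(h_1u,u)\leq\|h_1\|$. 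No Taylor bookkeeping is needed, and the ``moreover'' follows from the same identity after conjugating by $k_0$. If you want to make your route rigorous, you would at minimum need to replace $K_v$ by $K_{v^*}$ throughout the slice argument, rephrase the first-order matching in terms of $K_{v^*}\mathcal F$-components (so that $k$ commutes with the relevant projections), and prove the ``moreover'' by conjugation rather than substitution.
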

\begin{proof}
We only need to verify the inequality when $\angle(hv,\mathcal{F})$ is small. Notice for any vector $v\in \mathcal{F}$,
and any small element $w\in \mathfrak{k}_\alpha= (I+\theta)\mathfrak{g}_\alpha=(I+\theta)(I-\theta)^{-1}\mathfrak{p}_\alpha$,
the Lie algebra action (see Proposition \ref{prop:Lie algebra action}) has norm
\begin{equation}\label{eqn:norm}
||[w,v]||=||-\alpha(v)\cdot(I-\theta)(I+\theta)^{-1}w||\sim |\alpha(v)|\cdot ||w||.
\end{equation}
This is due to the fact that $(I+\theta)(I-\theta)^{-1}$ is a linear isomorphism between $\mathfrak{k}_\alpha$ and
$\mathfrak{p}_\alpha$ (see Proposition \ref{prop:Eberlein}), and when restricted to $\mathfrak{k}_\alpha\cap \mathcal{U}$,
it preserves the norms up to a uniform multiplicative constant.

Infinitesimally speaking, for $h= \exp(w)$, we have that $hv - v = [w, v]$, so the estimate on the Lie algebra
action tells us about the infinitesimal growth of $||hv - v||$. We also see that, since $[w,v]\in \mathfrak{p}_\alpha$, $h$
moves the vector $v$ in the direction $\mathfrak{p}_\alpha$ (which we recall is orthogonal to the flat $\mathcal{F}$,
see Proposition \ref{prop:Eberlein}). Now $v^*$ is a maximally singular vector in the
$\rho$-neighborhood of the unit vector $v$, so once $\rho$ is small enough, if $\alpha$ is any root with
$\alpha(v^*)\neq 0$, then $\alpha (v)$ will be uniformly bounded away from zero (depending only on the choice of $\rho$).
This shows that if a root $\alpha$
satisfies $\alpha(v^*)\neq 0$, then $\angle(hv,\mathcal{F}) \sim ||h||$ for all $h\in \exp(\mathfrak{k}_\alpha\cap \mathcal{U})$.

Now we move to analyzing the general case $h=\exp(w)$, where $w\in \mathfrak k$ is arbitrary.
If $\angle(hv,\mathcal{F})$ is small, then it follows that the components of $hv$ on each
$\mathfrak{p}_\alpha$ must be small. From the discussion above, this implies that the component of
$w$ in each $\mathfrak{k}_\alpha\mid_{\alpha(v^*)\neq 0}$ is small, i.e. $w$ almost lies in
$\mathfrak{k}_{v^*}=\mathfrak{k}_0 \oplus \bigoplus _{\alpha(v^*)=0}\mathfrak{k}_\alpha$. Since
$h$ almost lies in $K_{v^*}$, there exists an element $h_0\in K_{v^*}$ such that $h_0^{-1}h$ is close to the identity.
We write $h=h_0h_1$, where $h_1=\exp(w_1)\in \exp(\mathfrak{k}_{v^*}^\perp) =
\exp\Big(\bigoplus_{\alpha(v^*)\neq 0}\mathfrak{k}_\alpha \Big)$, and observe that the analysis in the previous paragraph
applies to the element $h_1$. Now observe that, infinitesimally,
$h_1v-v=[u_1,v]\in \bigoplus_{\alpha(v^*)\neq 0}\mathfrak{p}_{\alpha}$, so $h_1$ moves $v$ in a direction lying in
$\bigoplus_{\alpha(v^*)\neq 0}\mathfrak{p}_{\alpha}$. On the other hand, infinitesimally, $K_{v^*}$ moves the entire flat
$\mathcal F$ in the directions $\bigoplus_{\alpha(v^*)= 0}\mathfrak{p}_{\alpha}$ (corresponding to the action of its Lie algebra
$\mathfrak{k}_{v_*}$).
But these two directions are orthogonal, which means that $h_1v$ leaves not just $\mathcal{F}$ orthogonally, but actually
leaves orthogonally to the entire orbit $K_{v^*}\mathcal{F}$. This allows us to estimate
\begin{equation}\label{eqn1}
\angle(hv,\mathcal{F})= \angle(h_1v,h_0^{-1}\mathcal{F})\geq \angle(h_1v,K_{v^*}\mathcal{F})\sim ||h_1||,
\end{equation}
where at the last step, we use that $h_1$ moves $v$ orthogonally off the $K_{v^*}$ orbit of $\mathcal F$.
On the other hand, we are assuming that the vector $u$ lies in $(K_{v^*}\mathcal F)^\perp$, hence also in
$h_0^{-1}\mathcal{F}^\perp$. So we have the
sequence of inequalities
\begin{equation}\label{eqn2}
\angle(hu,\mathcal{F}^\perp)=\angle(h_1u,h_0^{-1}\mathcal{F}^\perp)\leq \angle(h_1u,u)\leq ||h_1||.
\end{equation}
Combining equations (\ref{eqn1}) and (\ref{eqn2}) gives us the first inequality.

Similarly, $\angle(hk_0v,K_{v^*}\mathcal{F})$ being small also implies that the component of $h$ on each
$\mathfrak{k}_\alpha\mid_{\alpha(v^*)\neq 0}$ is small. So by writing $h=h_0h_1$ in the same manner, we get
$\angle(hk_0v,K_{v^*}\mathcal{F})=\angle(h_1k_0v,K_{v^*}\mathcal{F})=\angle(k_0^{-1}h_1k_0v,K_{v^*}\mathcal{F})$.
Notice that $K_{v^*}$ conjugates $\mathfrak{k}_{v^*}^\perp$ to itself, so $k_0^{-1}h_1k_0$ is an element in
$\exp(\mathfrak{k}_{v^*}^\perp)$. In view of equation (\ref{eqn:norm}) and the fact that $k_0^{-1}h_1k_0v$ leaves
orthogonally to $K_{v^*}\mathcal{F}$, we obtain
$\angle(k_0^{-1}h_1k_0v,K_{v^*}\mathcal{F})\sim ||k_0^{-1}h_1k_0||=||h_1||$.
Combining this estimate with equation (\ref{eqn2}) gives the second inequality.
\end{proof}

\begin{lem}\label{lem:pick frame}
Let $X=G/K$ be a rank $r\geq 2$ irreducible symmetric space of non-compact type excluding $\SL(3,\mathbb R)/\SO(3)$ and $\SL(4,\mathbb{R})/\SO(4)$,
and fix a flat $\mathcal{F}\subseteq T_xX$ at $x$. Then there exists a constant $C>0$ that only depends on $X$, such that
for any $\frac{1}{2}$-orthonormal r-frame $\{v_1,...,v_r\}$ in $\mathcal{F}$, there is an orthonormal $(3r-2)$-frame
$\left \{v_1',v_1'',...,v_1^{(r)},v_i',v_i'' \hskip 2pt (2\leq i\leq r) \right \}$ in $\mathcal{F}^\perp$ such that
$$\angle(hv_i',\mathcal{F}^\perp)\leq C\angle(hv_i,\mathcal{F})$$
$$\angle(hv_i'',\mathcal{F}^\perp)\leq C\angle(hv_i,\mathcal{F})$$
$$\angle(hv_1^{(j)},\mathcal{F}^\perp)\leq C\angle(hv_1,\mathcal{F})$$
for all $h\in K$, $i=2,...,r$, $j=1,...,r$.
\end{lem}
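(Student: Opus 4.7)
The plan is to cast this lemma as an application of the Hall-marriage-type statement in Corollary \ref{cor:Hall's marriage}, where the ``selectable sets'' are supplied by Lemma \ref{lem:angle inequality}. For each $v_i \in \mathcal{F}$, I would first pick a maximally singular vector $v_i^* \in \mathcal{F}$ lying in a small $\rho$-neighborhood of $v_i$, with $\rho$ chosen small enough (depending only on $X$) that $\{v_1^*,\ldots,v_r^*\}$ remains $\tfrac{1}{2}$-orthonormal and still spans $\mathcal{F}$. Fix once and for all an orthonormal basis of each positive root space $\mathfrak{p}_\alpha$; let $B$ be the resulting orthonormal basis of $\mathcal{F}^\perp = \bigoplus_{\alpha\in\Lambda^+} \mathfrak{p}_\alpha$, and for each $i$ set
\[
B_i \;=\; \bigcup_{\alpha\in\Lambda^+,\ \alpha(v_i^*) \neq 0} \{\text{chosen basis vectors of } \mathfrak{p}_\alpha\}.
\]
By Lemma \ref{lem:angle inequality}, any unit vector $u$ lying in $\bigoplus_{\alpha(v_i^*)\neq 0} \mathfrak{p}_\alpha \subseteq (K_{v_i^*}\mathcal{F})^\perp$ automatically satisfies $\angle(hu,\mathcal{F}^\perp) \leq C\angle(hv_i,\mathcal{F})$ for every $h\in K$, with $C$ depending only on $X$.

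With this data $(V=\{v_1,\ldots,v_r\},\{B_i\})$ in hand, Corollary \ref{cor:Hall's marriage} would produce $(3r-2)$ distinct basis vectors grouped as $v_1',v_1'',\ldots,v_1^{(r)} \in B_1$ and $v_i',v_i'' \in B_i$ for $2\leq i\leq r$. Because distinct root spaces $\mathfrak{p}_\alpha$ are mutually orthogonal in $\mathcal{F}^\perp$, and I have fixed orthonormal bases inside each $\mathfrak{p}_\alpha$, the resulting $(3r-2)$-frame is orthonormal in $\mathcal{F}^\perp$ at no extra cost, and the three displayed angle inequalities are just the output of Lemma \ref{lem:angle inequality} applied to each chosen vector.

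The crux, and the step I expect to be the main obstacle, is verifying the cardinality hypothesis of Corollary \ref{cor:Hall's marriage}: for every subset $\{v_{i_1},\ldots,v_{i_k}\}\subseteq V$,
\[
|B_{i_1}\cup\cdots\cup B_{i_k}| \;=\; \sum_{\alpha\in\Lambda^+,\ \alpha|_{W}\not\equiv 0} \dim \mathfrak{p}_\alpha \;\geq\; 2k+r-2,
\]
where $W=\mathrm{span}\{v_{i_1}^*,\ldots,v_{i_k}^*\}$. Equivalently, I need to bound the total multiplicity of positive roots that vanish on $W$. This is a purely combinatorial statement about the restricted root system of $X$, and I would handle it by case analysis over the classification of irreducible non-compact type root systems of rank $r\geq 2$ (types $A_r,B_r,C_r,D_r,BC_r,E_6,E_7,E_8,F_4,G_2$). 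The key input is that the $v_{i_j}^*$ may be taken to lie in generic maximally singular positions in $\mathcal{F}$, which controls how small the joint zero set of roots on $W$ can be. The tight cases occur when $k=1$ and when $k=r$, and I expect the excluded symmetric spaces $SL(3,\mathbb R)/SO(3)$ (type $A_2$) and $SL(4,\mathbb R)/SO(4)$ (type $A_3$) to emerge precisely as the spaces for which the count on the right, $2k+r-2$, exceeds the available off-$K_{v^*}\mathcal{F}$ dimension.

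Once the cardinality inequality is verified in every non-excluded case, Corollary \ref{cor:Hall's marriage} furnishes the required orthonormal $(3r-2)$-frame, and feeding each selected vector back through Lemma \ref{lem:angle inequality} yields the three angle bounds with a single uniform constant $C$ depending only on $X$, completing the proof.
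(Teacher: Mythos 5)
Your reduction is exactly the paper's: choose maximally singular $v_i^*$ near each $v_i$, set $Q_i=(K_{v_i^*}\mathcal F)^\perp\simeq\bigoplus_{\alpha(v_i^*)\neq 0}\mathfrak p_\alpha$, take the natural orthonormal basis $B$ of $\mathcal F^\perp$ adapted to the root space decomposition, feed the selectable sets $B_i=Q_i\cap B$ into Corollary \ref{cor:Hall's marriage}, and invoke Lemma \ref{lem:angle inequality} to supply the three angle inequalities. You also correctly recognize that the whole weight of the lemma sits on the cardinality hypothesis of Corollary \ref{cor:Hall's marriage}. So far this matches the paper.

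The gap is that you do not actually establish that cardinality bound; you defer it to a case analysis over the classification of irreducible restricted root systems, which you have not carried out. That deferred step \emph{is} the lemma. The paper avoids case-by-case reasoning entirely: it proves the bound uniformly in Lemma \ref{lem:dimension-estimate} by bounding $\left|\left\{\alpha\in\Lambda:H_\alpha\in V^\perp\right\}\right|$, introducing $t_i$ (half the number of roots lying in a maximally rooted $i$-plane) and showing inductively, using only irreducibility and the closure property of roots under $\pm$ sums, that $t_i-t_{i-1}\geq i$. Telescoping gives $\dim(Q_{i_1}+\cdots+Q_{i_k})\geq k(2r-k+1)/2$, which beats $2k+r-2$ except when $r=k\in\{2,3\}$; those two stragglers are settled by comparing directly with $\dim(\mathcal F^\perp)=n-r$, and this is precisely where $\SL(3,\mathbb R)/\SO(3)$ and $\SL(4,\mathbb R)/\SO(4)$ get excluded. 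Also note one inaccuracy in your framing: you describe the $v_i^*$ as being in ``generic maximally singular positions,'' suggesting genericity plays a role, but the paper's Lemma \ref{lem:dimension-estimate} holds for \emph{any} spanning set $\{v_1^*,\ldots,v_r^*\}$; the only thing you need from $\rho$ is that the $v_i^*$ remain distinct (hence span), which the paper arranges and you also get (with room to spare) from your $\tfrac12$-orthonormality. So the fix is not a type-by-type verification but the inductive root-counting argument; without it, your proposal proves nothing the Hall machinery did not already presuppose.
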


\begin{proof}
Once we have chosen a parameter $\rho$, we will denote by $v_i^*$ a maximally singular vector in
$\mathcal{F}$ that is $\rho$-close to $v_i$, and we will let
$Q_i=(K_{v_i^*}\mathcal{F})^\perp\simeq \bigoplus_{\alpha\in \Lambda^+,\alpha(v_i^*)\neq 0}\mathfrak{p}_\alpha$.
We now fix an $\rho$ small enough so that, for every
$\frac{1}{2}$-orthonormal $r$-frame $\{v_1, \ldots ,v_r\}\subset \mathcal{F}$, the corresponding
$\{v_i^*\}|_{i=1}^r$ are distinct.
For each $v_i$, the vectors in $Q_i$ are the possible choice of vectors that satisfy the angle inequality provided by
Lemma \ref{lem:angle inequality}. So it suffices to find $r$ vectors in $Q_1$, and two vectors in each $Q_i$ $(i\neq1)$,
such that the chosen $(3r-2)$ vectors form an orthonormal frame.

Now for each root $\alpha$, we pick an orthonormal frame $\{b_{\alpha_i}\}$ on $\mathfrak{p}_\alpha$, we collect them
into the set $B:=\{b_i\}|_{i=1}^{n-r}$, which forms an orthonormal frame on $\mathcal{F}^\perp$.
We will pick the $(3r-2)$-frame from the vectors in $B$. For instance, vector $v_1$ has selectable set
$B_1:=Q_1\cap B$, in which we want to choose $r$ elements, while for $i=2,...,r$, vector $v_i$ has selectable set
$B_i:=Q_i\cap B$, from which we want to choose two elements. Most importantly, the $(3r-2)$ chosen vectors
have to be distinct from each other. This is a purely combinatorial problem, and can be solved by using Hall's
Marriage theorem. In view of Corollary \ref{cor:Hall's marriage}, we only need to check the cardinality condition.
We notice the selectable set of $v_i$ is $B_i$ which spans $Q_i$, so $|B_i|=\dim(Q_i)$. The next lemma will
estimate the dimension of the $Q_i$, and hence will complete the proof of Lemma \ref{lem:pick frame}.
\end{proof}

\begin{lem}\label{lem:dimension-estimate}
Let $X=G/K$ be a rank $r\geq 2$ irreducible symmetric space of non-compact type, excluding $\SL(3,\mathbb R)/\SO(3)$ and $\SL(4,\mathbb{R})/\SO(4)$, and fix a flat $\mathcal{F}$. Assume $\{v_1^*,...,v_r^*\}$ spans $\mathcal{F}$, and let $Q_i=K_{v_i^*}\mathcal{F}$. Then for any subcollection of vectors $\{v_{i_1}^*,...,v_{i_k}^*\}$, we have $\dim(Q_{i_1}+...+Q_{i_k})\geq (2k+r-2)$.
\end{lem}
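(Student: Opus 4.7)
My plan is to recast the dimension estimate as an inequality on sub-root systems of the restricted root system of $X$, and then verify it by appealing to the classification of irreducible symmetric spaces of non-compact type.

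First, using Proposition~\ref{prop:Eberlein} and the identification $(K_{v_i^*}\mathcal{F})^\perp \simeq \bigoplus_{\alpha\in \Lambda^+,\, \alpha(v_i^*)\neq 0} \mathfrak{p}_\alpha$ already used in the proof of Lemma~\ref{lem:angle inequality}, I rewrite $Q_{i_1}+\cdots+Q_{i_k}$ as $\bigoplus_{\alpha \in \Lambda^+,\, \alpha|_V \neq 0} \mathfrak{p}_\alpha$, where $V := \mathrm{span}\{v_{i_1}^*, \ldots, v_{i_k}^*\}$ is a $k$-dimensional subspace of $\mathfrak{a}$ (since $\{v_1^*, \ldots, v_r^*\}$ spans the $r$-dimensional space $\mathfrak{a}$, it must be a basis). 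Using $\sum_{\alpha \in \Lambda^+}\dim \mathfrak{p}_\alpha = n - r$, the desired estimate $\dim(Q_{i_1}+\cdots+Q_{i_k}) \geq 2k+r-2$ is equivalent to the ``vanishing-side'' inequality
$$
\sum_{\alpha \in \Lambda^+,\, \alpha|_V = 0} \dim \mathfrak{p}_\alpha \;\leq\; n - 2r - 2k + 2. \qquad (\star)
$$

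The roots $\alpha \in \Lambda$ with $\alpha|_V = 0$ form a closed sub-root system $\Sigma_V \subseteq \Lambda$ supported in $V^\perp \subseteq \mathfrak{a}^*$, hence of rank at most $s := r - k$. So $(\star)$ reduces to the uniform claim that, for every sub-root system $\Sigma \subseteq \Lambda$ of rank at most $s$,
$$
M(\Sigma) := \sum_{\beta \in \Sigma^+} \dim \mathfrak{p}_\beta \;\leq\; n - 4r + 2s + 2.
$$
This is now a statement purely about the restricted root system $\Lambda$ and its root multiplicities $m_\alpha = \dim \mathfrak{p}_\alpha$.

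I would verify this claim case by case using the classification of irreducible restricted root systems for rank-$r$ symmetric spaces (types $A_r, B_r, C_r, D_r, BC_r, E_{6,7,8}, F_4, G_2$) together with their standard multiplicities, identifying in each type the extremal rank-$s$ sub-root system maximizing $M(\Sigma)$ (in the classical types via a Borel--de Siebenthal-type enumeration, and in the exceptional ones by direct inspection of the Dynkin diagram). The tightest case arises in type $A_r$ with all multiplicities $1$ (i.e., $SL(r+1,\mathbb{R})/SO(r+1)$): at $s = 0$, equivalently $k = r$ and $V = \mathfrak{a}$, no nonzero root vanishes on $V$, so $(\star)$ reduces to $n \geq 4r - 2$. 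For $A_r$ this becomes $r(r+3)/2 \geq 4r - 2$, equivalently $(r-1)(r-4) \geq 0$, which fails precisely when $r \in \{2, 3\}$ --- recovering exactly the two excluded spaces $SL(3,\mathbb{R})/SO(3)$ and $SL(4,\mathbb{R})/SO(4)$.

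The main obstacle is the bookkeeping at intermediate values of $s$ across all the permitted types: one must check that the extremal $M(\Sigma)$ grows slowly enough (as a function of $s$) to stay under the linear bound $n - 4r + 2s + 2$. In the classical types (including $BC_r$ with its various root multiplicities) this reduces to elementary estimates on set-partitions of basis indices and coordinate-root computations; for the exceptional types $E_{6,7,8}, F_4, G_2$ it is a small finite verification using standard tables of sub-root systems. The underlying point is that outside the two low-rank $A$-type exceptions, the restricted root system has enough positive-root multiplicity to guarantee $(\star)$ for every choice of $V$.
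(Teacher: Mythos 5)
Your proposal takes a genuinely different route from the paper, but it is a plan rather than a proof --- the step that carries essentially all of the content of the lemma is left unverified.

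Your first steps are correct and agree with the paper: identifying $Q_{i_1}+\cdots+Q_{i_k} \simeq \bigoplus_{\alpha\in\Lambda^+,\,\alpha|_V\neq 0}\mathfrak{p}_\alpha$, passing to the complementary ``vanishing-side'' inequality $\sum_{\alpha\in\Lambda^+,\,\alpha|_V=0}\dim\mathfrak{p}_\alpha \leq n-2r-2k+2$, and the algebra yielding the $s=0$ case $n\geq 4r-2$, which (for type $A_r$) becomes $(r-1)(r-4)\geq 0$ and correctly recovers the two excluded spaces. At this point the two arguments diverge. You propose to bound $\max_\Sigma M(\Sigma)$ over all rank-$\leq s$ sub-root systems $\Sigma$ of $\Lambda$ by running through the classification of irreducible restricted root systems with their multiplicities, identifying the extremal $\Sigma$ in each type. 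The paper instead avoids the classification almost entirely: it uses only $\dim\mathfrak{p}_\alpha\geq 1$ to reduce the estimate to a \emph{count} of positive roots not vanishing on $V$, and then establishes a purely root-theoretic lemma (the Claim from \cite[Lemma~5.2]{CF1}, proved using irreducibility and \cite[Proposition 2.9.3]{Eb}) that $t_i - t_{i-1}\geq i$, where $t_i$ is the maximal number of positive roots lying in an $i$-dimensional subspace. A telescoping sum gives the uniform bound $\dim(Q_{i_1}+\cdots+Q_{i_k})\geq k(2r-k+1)/2$, which exceeds $2k+r-2$ for all $k$ whenever $r\geq 4$, and for all $k<r$ when $r=2,3$. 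Only the two residual cases $k=r\in\{2,3\}$ require comparing $n-r$ with $3r-2$, i.e.\ $n\geq 4r-2$, and this is the single spot where the paper invokes any knowledge of which low-rank, low-dimensional irreducible symmetric spaces exist.

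The genuine gap in your proposal is precisely the step you call ``the main obstacle'': you assert, but do not verify, that the extremal $M(\Sigma)$ for rank-$\leq s$ subsystems stays below $n-4r+2s+2$ for all $0\leq s\leq r-1$ across all permitted types, and that $A_r$ at $s=0$ is the tight case. For $A_r$ with unit multiplicities, the boundary case $s=r-1$ (i.e.\ $k=1$) in fact gives \emph{equality} in your target inequality, so the claim that $s=0$ is ``tightest'' needs care, and the required analysis for types $B_r$, $C_r$, $D_r$, $BC_r$ (with their varying multiplicities) and the exceptional types would be a substantial bookkeeping exercise. This is exactly what the paper's Claim handles in one stroke: the inequality $t_i-t_{i-1}\geq i$ encodes, uniformly over all irreducible root systems, the growth you would otherwise have to verify type-by-type. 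So while your reformulation is valid, the approach trades a short, classification-free combinatorial argument for a classification-dependent case check that remains to be done.

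A small typographical note: the lemma statement as printed has $Q_i=K_{v_i^*}\mathcal{F}$; as both you and the paper's own proof make clear, the intended definition is $Q_i=(K_{v_i^*}\mathcal{F})^\perp$.
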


\begin{proof}
Since $Q_i=(K_{v_i^*}\mathcal{F})^\perp\simeq \bigoplus_{\alpha\in \Lambda^+,\alpha(v_i^*)\neq 0}\mathfrak{p}_\alpha$,
we obtain $Q_{i_1}+...+Q_{i_k}=\bigoplus_{\alpha\in \Lambda^+,\alpha(V)\neq 0}\mathfrak{p}_\alpha$, where
$V=\text{Span}(v_{i_1}^*,...,v_{i_k}^*)$. We can estimate
$$\dim(Q_{i_1}+...+Q_{i_k}) = \sum_{\alpha\in \Lambda^+,\alpha(V)\neq 0}\dim(\mathfrak{p}_\alpha)
\geq \left |\left \{\alpha\in \Lambda^+,\alpha(V)\neq 0\right \}\right |=\frac{1}{2}\left(|\Lambda|-\left |\left \{\alpha\in
\Lambda,H_\alpha\in V^\perp \right \}\right |\right),$$
where $V^\perp$ is the orthogonal complement of $V$ in $\mathcal{F}$, and $H_\alpha$ is the vector in $\mathcal{F}$
that represents $\alpha$.

Now we denote $t_i=\frac{1}{2}\max_{U\subseteq \mathcal{F},\dim(U)=i}\left |\{\alpha\in \Lambda, H_\alpha\in U\}\right |$,
the number of positive roots in the maximally rooted $i$-dimensional subspace. We use the following result that appears in the proof of \cite[Lemma 5.2]{CF1}. For completeness, we also add their proof here.
\begin{claim}\cite[Lemma 5.2]{CF1}
$t_i-t_{i-1}\geq i$, for $1\leq i\leq r-1$.
\end{claim}
\begin{proof} This is proved by induction on $i$. For $i=1$, the inequality holds since $t_0=0$ and $t_1=1$.
Assuming $t_{i-1}-t_{i-2}\geq i-1$ holds, we let $V_{i-1}$ be an $(i-1)$-dimensional maximally rooted subspace.
By definition, the number of roots that lie in $V_{i-1}$ is $2t_{i-1}$. There exists a root $\alpha$ so that $H_\alpha$
does not lie in $V_{i-1}$, and also does not lie on its orthogonal complement (by irreducibility of the root system).
So $H_\alpha^\perp\cap V_{i-1}:=Z$ is a codimension one subspace in $V_{i-1}$. By the induction hypothesis,
there are at least $i-1$ pairs of root vectors that lie in $V_{i-1}-Z$, call them $\pm H_{\alpha_1},...,\pm H_{\alpha_{i-1}}$.
Hence by properties of root system \cite[Proposition 2.9.3]{Eb}, either $\pm (H_\alpha+ H_{\alpha_l})$ or
$\pm (H_\alpha- H_{\alpha_l})$ is a pair of root vectors, for each $1\leq l\leq i-1$. Along with $\pm H_\alpha$,
these pairs of vectors lie in $(V_{i-1}\oplus \langle H_\alpha\rangle)-V_{i-1}$. We have now found $2i$ root vectors
in the $i$-dimensional subspace $V_{i-1}\oplus \langle H_\alpha\rangle$, which do not lie on the maximally rooted
subspace $V_{i-1}$. This shows $t_i-t_{i-1}\geq i$, proving the claim.
\end{proof}

Finally, we can estimate
$\dim(Q_{i_1}+...+Q_{i_k})\geq \frac{1}{2}\left(|\Lambda|-\left | \left \{\alpha\in \Lambda,H_\alpha\in V^\perp\right \}\right |
\right )\geq t_r-t_{r-k}$.
Using the Claim, a telescoping sum gives us $t_r-t_{r-k}\geq r+(r-1)+...+(r-k+1)=k(2r-k+1)/2$, whence the
lower bound
$\dim(Q_{i_1}+...+Q_{i_k})\geq k(2r-k+1)/2$.
When $r\geq 4$, or $k<r=3$ , or $k<r=2$, it is easy to check that $k(2r-k+1)/2 \geq 2k+r-2$.
This leaves the case when $r=k=3$, or $r=k=2$.
When $r=k=3$, we can instead estimate $\dim(Q_1+Q_2+Q_3)=\dim(\mathcal{F}^\perp)=n-3\geq 7=2k+r-2$, provided $n\geq 10$, which only
excludes the rank three symmetric space $\SL(4,\mathbb{R})/\SO(4)$. A similar analysis when $r=k=2$ only excludes the rank two symmetric space
$SL(3,\mathbb R)/SO(3)$. This completes the proof of Lemma \ref{lem:dimension-estimate}, hence completing the proof of Lemma \ref{lem:pick frame}.
\end{proof}

\begin{remark} In the rank two case, both Theorem \ref{thm:weak eigenvalue matching} and
Theorem \ref{thm:bound-ratio} only give you statements about degree $=n$.  Our Main Theorem then only
gives surjectivity of comparison maps in top degree, which agrees with the result of \cite{LS}, and the
corresponding Jacobian estimate is consistent with \cite{CF1} \cite{CF2}.
\end{remark}

\subsection{Proof of Theorem \ref{thm:weak eigenvalue matching}}

We assume $k=r$ without loss of generality since otherwise we can always extend the $k$-frame to an $r$-frame that
has small angle to $\mathcal{F}$. Our first step is to move the frame so as to lie in $\mathcal F$, while
controlling the angles between the resulting vectors (so that we can apply Lemma \ref{lem:pick frame}).
This is done by first moving the vectors to the respective
$K_{v_i^*}\mathcal F$, and then moving to $\mathcal F$.

As in the proof of Lemma \ref{lem:angle inequality}, $\angle(v_i,\mathcal{F})$ being
small implies that the components of $v_i$ on each $\mathfrak{p}_\alpha$ is small. The $K$-orbit of $v_i$
intersects $\mathcal F$ finitely many times (exactly once in each Weyl chamber), and if each of these intersections is
$\rho$-close to a maximally singular vector, choose $v_i^*$ to be the one closest to $v_i$. The element in $K$
moving $v_i$ to $\mathcal F$ will almost lie in $K_{v_i^*}$ (by an argument similar to the one in Lemma \ref{lem:angle inequality}).
By decomposing this element as a product $\hat{k}_ik_i$, we obtain a small $k_i$ which sends $v_i$ to $K_{v_i^*}\mathcal F$
(and $\hat{k}_i \in K_{v_i^*}$).
If $k_i^{-1}=exp(u_i)$, we have
$u_i\in \bigoplus_{\alpha\in \Lambda^+, \alpha(v^*)\neq 0}\mathfrak{k}_\alpha$.

We now estimate the norm $||k_i||$. From the identification of norms in a small neighborhood of the identity, we have
$||k_i|| =||u_i||$. Since $\hat{k}_i$ is an element in $K_{v_i^*}$ that sends $k_iv_i$ to $\mathcal{F}$,
an argument similar to the proof of second inequality in Lemma \ref{lem:angle inequality} gives us
$$\angle(v_i,K_{v_i^*}\mathcal{F})=\angle((\hat{k}_ik_i^{-1}\hat{k}_i^{-1})(\hat{k}_ik_iv_i),K_{v_i^*}\mathcal{F})\sim_{\rho} ||\hat{k}_ik_i^{-1}\hat{k}_i^{-1}||=||k_i||$$
(where the constant will depend on the choice of $\rho$). On the other hand, since
$\mathcal F \subset K_{v_i^*}\mathcal F$, we obtain $\angle(v_i,K_{v_i^*}\mathcal{F})\leq \angle(v_i,\mathcal{F})$.
But by hypothesis, $\angle(v_i,\mathcal{F})<\epsilon$.
Putting all this together, we see that, for each fixed $\rho$, there exists a constant $C'$ that only depends on
$X$, so that each of the $||k_i||$ is bounded above by $\frac{1}{2}C'\epsilon$. In particular, any $\{k_i\}_{i=1}^r$ perturbation
of an orthonormal frame gives rise to a $C'\epsilon$-orthonormal frame, and hence the collection
$\{k_1v_1, \ldots , k_rv_r\}$ forms a $C'\epsilon$-orthonormal frame.

Next, since $\hat{k}_i$ is an element in $K_{v_i^*}$, it leaves $v_i^*$ fixed. From triangle
inequality we obtain
$$ \angle(\hat{k}_ik_iv_i, k_iv_i) \leq  2 \angle(k_iv_i, v_i^*) < 2\rho.$$
It follows that the collection
of vectors $\{\hat{k}_1k_1v_1, \ldots , \hat{k}_rk_rv_r\} \subset \mathcal F$ is obtained from the
$C'\epsilon$-orthonormal frame  $\{k_1v_1, \ldots , k_rv_r\}$ by rotating each of
the various vectors by an angle of at most $2\rho$ hence forms a
$(C'\epsilon + 4\rho)$-orthonormal basis in $\mathcal F$. In particular, once $\rho$ and $\delta$ are
chosen small enough, it gives us a $1/2$-orthonormal basis inside $\mathcal F$.

Applying Lemma \ref{lem:pick frame} to the $1/2$-orthonormal frame $\{\hat{k}_1k_1v_1, \ldots , \hat{k}_rk_rv_r\}
\subset \mathcal F$
gives us an orthonormal $(3r-2)$-frame $\left \{v_1',...,v_1^{(r)},v_i',v_i'' \hskip 2pt (2\leq i\leq r)\right \}$ such that the
angle inequalities hold. Now by the second inequality of Lemma \ref{lem:angle inequality}, we have the following
inequalities:
$$\angle(hv_i',\mathcal{F}^\perp)\leq C\angle(hk_iv_i,K_{v_i^*}\mathcal{F})\leq C\angle(hk_iv_i,\mathcal{F})$$
$$\angle(hv_i'',\mathcal{F}^\perp)\leq C\angle(hk_iv_i,K_{v_i^*}\mathcal{F})\leq C\angle(hk_iv_i,\mathcal{F})$$
$$\angle(hv_1^{(j)},\mathcal{F}^\perp)\leq C\angle(hk_1v_1,K_{v_1^*}\mathcal{F})\leq C\angle(hk_1v_1,\mathcal{F})$$
for $2\leq i\leq r$, $1\leq j\leq r$ and any $h\in K$. Finally we translate each of the vectors $v_i'$, $v_i''$ by $k_i^{-1}$,
and each $v_1^{(j)}$ by $k_1^{-1}$, producing a $C'\epsilon$-orthonormal $(3r-2)$-frame that satisfies the
inequalities in Theorem \ref{thm:weak eigenvalue matching}, hence completing the proof.


\section{Surjectivity of the comparison map in bounded cohomology}\label{sec:surjectivity}

In this Section, we provide some background on cohomology (see Section \ref{sec:cohomology}),
establish the {\bf Main Theorem} (Section \ref{subsec:maintheorem}), establish some limitations
on our technique of proof (Section \ref{subsec:obstruction}), and work
out a detailed class of examples (Section \ref{subsec:example}).

\subsection{Bounded cohomology}\label{sec:cohomology}
Let $X=G/K$ be a symmetric space of non-compact type, and $\Gamma$ be a cocompact lattice in $G$. We recall the definition of group cohomology, working with $\mathbb{R}$ coefficients (so that we can relate these to the de Rham cohomology). Let $C^n(\Gamma,\mathbb{R})=\{f:\Gamma^n\rightarrow \mathbb{R}\}$ be the space of $n$-cochains. Then the coboundary map $d:C^n(\Gamma,\mathbb{R})\rightarrow C^{n+1}(\Gamma,\mathbb{R})$ is defined by
$$df(\gamma_1,...,\gamma_{n+1})=f(\gamma_2,...,\gamma_{n+1})+\sum_{i=1}^n(-1)^if(\gamma_1,...\gamma_{i-1},\gamma_i\gamma_{i+1},\gamma_{i+2},...,\gamma_{n+1})$$
$$+(-1)^{n+1}f(\gamma_1,...,\gamma_n)$$
The homology of this chain complex is $H^\ast(\Gamma,\mathbb{R})$, the group cohomology of $\Gamma$ with
$\mathbb{R}$ coefficients.
Moreover, if we restrict the cochains above to {\it bounded} functions, we obtain the space of bounded $n$-cochains $C^n_b(\Gamma,\mathbb{R})=\{f:\Gamma^n\rightarrow \mathbb{R}\mid f\,\textrm{is bounded}\}$ and the corresponding bounded cohomology $H^\ast_b(\Gamma,\mathbb{R})$ of $\Gamma$. The inclusion of the bounded cochains into the ordinary cochains induces the comparison map $H^\ast_b(\Gamma,\mathbb{R}) \rightarrow H^\ast(\Gamma,\mathbb{R})$.

Similarly, we can define the (bounded) continuous cohomology of $G$, by taking the space of continuous $n$-cochains
$C^n_c(G,\mathbb{R})=\{f:G^n\rightarrow \mathbb{R}\mid f\,\textrm{is continuous}\}$ or the space of bounded continuous
cochains $C^n_{c,b}(G,\mathbb{R})=\{f:G^n\rightarrow \mathbb{R}\mid f\,\textrm{is continuous and bounded}\}$. With the
same coboundary maps as above, this gives two new chain complexes, whose homology will be denoted by $H^\ast_c(G,\mathbb{R})$ and $H^\ast_{c,b}(G,\mathbb{R})$ respectively. Again, one has a naturally induced comparison map
$H^\ast_{c,b}(G,\mathbb{R})\rightarrow H^\ast_{c}(G,\mathbb{R})$.

Now let $M=X/\Gamma$ be the closed locally symmetric space covered by $X$. Note that $M$ is a $K(\Gamma,1)$, so
$$H_{dR}^\ast(M, \mathbb{R})\simeq H_{sing}^\ast(M,\mathbb{R})\simeq H^\ast(\Gamma,\mathbb{R})$$
The isomorphism between the de Rham cohomology and group cohomology is explicitly given by
$$\phi:H_{dR}^k(M, \mathbb{R})\rightarrow H^k(\Gamma,\mathbb{R})$$
$$\omega\mapsto f_\omega$$
where $f_\omega(\gamma_1,\ldots ,\gamma_k)=\int_{\Delta(\gamma_1, \ldots ,\gamma_k)}\widetilde{\omega}$. Here, $\widetilde{\omega}$ is a lift of $\omega$ to $X$, and $\Delta(\gamma_1,\ldots ,\gamma_k)$ is any natural $C^1$ $k$-filling with ordered vertices $\{x,\gamma_1x,(\gamma_1\gamma_2)x,\ldots , \hskip 2pt (\gamma_1\gamma_2\cdots\gamma_k)x\}$ for some fixed basepoint $x\in X$ (for instance, one can choose $\Delta(\gamma_1,\ldots ,\gamma_k)$ to be the geodesic coning simplex, see
Dupont \cite{Du}). Alternatively, we can use the barycentric straightened $C^1$ simplex $st(\Delta(\gamma_1,\ldots ,\gamma_k))$ (which we defined in Section \ref{sec:barycenter method}). That is to say, if we define $\overline{f_\omega}(\gamma_1,\ldots ,\gamma_k)=\int_{st(\Delta(\gamma_1,\ldots ,\gamma_k))}\widetilde{\omega}$,
then $\overline{f_\omega}$ represents the same cohomology class as $f_\omega$. This is due to the fact that the barycentric straightening is $\Gamma$-equivariant (see \cite[Section 3.2]{LS}). We call $\overline{f_\omega}$ the barycentrically straightened cocycle.

On the other hand, there is a theorem of van Est \cite{Ve} which gives the isomorphism between the relative Lie algebra cohomology $H^\ast(\mathfrak{g},\mathfrak{k},\mathbb{R})$ and the continuous bounded cohomology $H^\ast_c(G,\mathbb{R})$. A class in $H^k(\mathfrak{g},\mathfrak{k},\mathbb{R})$ can be expressed by an alternating $k$-form $\varphi$ on $\mathfrak{g}/\mathfrak{k} \simeq T_xX$. By left translation, it gives a closed $C^\infty$ $k$-form $\widetilde{\varphi}$ on $X=G/K$. In \cite{Du}, this isomorphism is explicitly given by
$$\phi:H^k(\mathfrak{g},\mathfrak{k},\mathbb{R})\rightarrow H^k_c(G,\mathbb{R})$$
$$\varphi\mapsto f_\varphi$$
where $f_\varphi(g_1,\ldots ,g_k)=\int_{\Delta(g_1,\ldots ,g_k)}\widetilde{\varphi}$, and $\Delta(g_1,\ldots ,g_k)$ is the geodesic simplex with ordered vertices consisting of $\{x,g_1x,(g_1g_2)x,\ldots ,(g_1g_2\cdots g_k)x\}$ for some fixed basepoint $x\in X$. Again, we can replace $\Delta(g_1,\ldots ,g_k)$ by the barycentric straightened $C^1$ simplex $st(\Delta(g_1,\ldots ,g_k))$, and the resulting barycentrically straightened function $\overline{f_\varphi}(g_1,\ldots ,g_k)=\int_{st(\Delta(g_1,\ldots ,g_k))}\widetilde{\varphi}$ is in the same cohomology class as $f_\varphi$.

\subsection{Proof of the Main Theorem}\label{subsec:maintheorem}

In this section, we use Theorem \ref{thm:bound-ratio} to establish the {\bf Main Theorem}. We need to show both comparison maps $\eta$ and $\eta'$ are surjective. Let us start with $\eta$. We use the van Est isomorphism (see Section \ref{sec:cohomology}) to identify $H^\ast_c(G,\mathbb{R})$ with $H^\ast(\mathfrak{g},\mathfrak{k},\mathbb{R})$. For any class $[f_\varphi]\in H^k_c(G,\mathbb{R})$ where $f_\varphi(g_1,\ldots ,g_k)=\int_{\Delta(g_1,\ldots ,g_k)}\widetilde{\varphi}$, we instead choose the barycentrically straightened representative $\overline{f_\varphi}$. Then for any $(g_1,\ldots ,g_k)\in G^k$, we have
\begin{equation} \label{eqn:Jacobian-bound}
\abs{\overline{f_\varphi}(g_1,\ldots ,g_k)}=\abs{\int_{st(\Delta(g_1,\ldots ,g_k))}\widetilde{\varphi}\hskip 3pt }
\leq \abs{\int_{\Delta_s^k}st_V^\ast\widetilde{\varphi}\hskip 3pt }\leq \int_{\Delta_s^k}\abs{Jac(st_V)}\cdot \|\widetilde{\varphi}\|d\mu_0
\end{equation}
where $d\mu_0$ is the standard volume form of $\Delta_s^k$. But from Proposition \ref{prop:reduction} and Theorem \ref{thm:bound-ratio}, the expression
$|Jac(st_V)|$ is uniformly bounded above by a constant (independent of the choice of vertices $V$ and the point $\delta\in \Delta_s^k$), while the form $\widetilde{\varphi}$ is invariant under the $G$-action, hence bounded in norm. It follows that the last
expression above is less than some constant $C$ that depends only on the choice of alternating form $\varphi$. We have thus
produced, for each class $[f_\varphi]$ in $H^k_c(G,\mathbb{R})$, a bounded representative $\overline{f_\varphi}$.
So the comparison map $\eta$ is surjective. The argument for surjectivity of $\eta'$ is virtually identical, using the
explicit isomorphism between $H^k(\Gamma,\mathbb{R})$ and $H_{dR}^k(M, \mathbb{R})$ discussed in Section
\ref{sec:cohomology}. For any class $[f_\omega]\in H^k(\Gamma,\mathbb{R})$, we choose the barycentrically
straightened representative $\overline{f_\omega}$. The differential form $\widetilde{\omega}$ has bounded norm,
as it is the $\Gamma$-invariant lift of the smooth differential form $\omega$ on the compact manifold $M$. So again,
the estimate in Equation (\ref{eqn:Jacobian-bound}) shows the representative $\overline{f_\omega}$ is bounded,
completing the proof.

\subsection{Obstruction to Straightening Methods}\label{subsec:obstruction}
In this section, we give a general obstruction to the straightening method that is applied in section
\ref{subsec:maintheorem}. In the next section, we will use this to give some concrete examples showing
that Theorem \ref{thm:bound-ratio} {\em is not true}
when $\dim(S)\leq n-r$. Throughout this section, we let $X=G/K$ be an $n$-dimensional symmetric space of
non-compact type, and we give the following definitions.

\begin{defn}\label{def:straightening} Let $C^0(\Delta^k,X)$ be the set of singular k-simplices in $X$, where
$\Delta^k$ is assumed to be equipped with a fixed Riemannian metric. Assume that we are given a collection of
maps $st_k:C^0(\Delta^k,X)\rightarrow C^0(\Delta^k,X)$. We say this collection of maps forms a {\em straightening}
if it satisfies the following properties:
\begin{enumerate}
\item the maps induces a chain map, that is, it commutes with the boundary operators.
\item $st_n$ is $C^1$ smooth, that is, the image of $st_n$ lies in $C^1(\Delta^n,X)$.
\end{enumerate}
For a subgroup $H\leq G$, we say the straightening is $H$-equivariant if the maps $st_k$ all commute with the
$H$-action.
\end{defn}

Since $X$ is simply connected, property $(a)$ of Definition \ref{def:straightening} implies that the chain map
$st_*$ is actually chain homotopic to the identity. Also, property (b) of Definition \ref{def:straightening} implies the image
of any straightened $k$-simplex is $C^1$-smooth, i.e. $Im(st_k) \subset C^1(\Delta^k, X)$. The barycentric straightening
introduced in Section \ref{sec:barycenter method} is a $G$-equivariant straightening. As we saw in Section
\ref{subsec:maintheorem}, obtaining a uniform control on the Jacobian of the straightened $k$-simplices immediately
implies a surjectivity result for the comparison map from bounded cohomology to ordinary cohomology. This motivates
the following:

\begin{defn} We say the straightening is $k$-bounded, if there exists a constant $C>0$, depending only on $X$ and
the chosen Riemannian metric on $\Delta^k$, with the following property. For any $k$-dimensional singular simplex
$f\in C^0(\Delta^k,X)$, and corresponding straightened simplex $st_k(f):\Delta^k\rightarrow X$, the Jacobian of $st_k(f)$
satisfies:
$$\abs{Jac(st_k(f))(\delta)}\leq C$$
where $\delta \in \Delta^k$ is arbitrary (and the Jacobian is computed relative to the fixed Riemannian metric on $\Delta^k$).
\end{defn}

Our Theorem \ref{thm:bound-ratio} and Proposition \ref{prop:reduction} then tells us that, when
$r = \mathbb R\text{-rank}(G)\geq 2$ (excluding the two cases
$\SL(3,\mathbb R)/\SO(3)$ and $\SL(4,\mathbb{R})/\SO(4)$),
our barycentric straightening is $k$-bounded for all $k\geq n-r+2$. One can wonder whether
this range can be improved. In order to obtain obstructions, we recall \cite[Theorem 2.4]{LS}.
Restricting to the case of locally symmetric spaces of non-compact type, the theorem says:

\begin{thm}\cite[Theorem 2.4]{LS}\label{thm:simplicial volume}
Let $M$ be an $n$-dimensional locally symmetric space of non-compact type, with universal cover $X$, and
$\Gamma$ be the fundamental group of $M$. If $X$ admits an $n$-bounded, $\Gamma$-equivariant straightening,
then the simplicial volume of $M$ is positive.
\end{thm}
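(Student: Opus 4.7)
The plan is to use the given $n$-bounded, $\Gamma$-equivariant straightening to manufacture a \emph{bounded} singular $n$-cocycle on $M$ whose cohomology class pairs non-trivially with the fundamental class $[M]$, and then to invoke Gromov's duality between bounded cohomology and simplicial volume to conclude that $\|M\| > 0$.

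First, I would take $\omega$ to be the Riemannian volume form on $M$, and $\widetilde\omega$ its lift to $X$. Since $M$ is compact, $\widetilde\omega$ is a $\Gamma$-invariant $n$-form on $X$ of uniformly bounded pointwise norm. For each singular $n$-simplex $\sigma : \Delta^n \to M$, one lifts it to some $\widetilde\sigma : \Delta^n \to X$, applies the straightening $st_n$, and sets
\[
\overline{f_\omega}(\sigma) := \int_{st_n(\widetilde\sigma)} \widetilde\omega.
\]
$\Gamma$-equivariance of $st_n$ together with $\Gamma$-invariance of $\widetilde\omega$ makes this independent of the chosen lift, so $\overline{f_\omega}$ is a well-defined singular cochain on $M$; property (b) of Definition \ref{def:straightening} makes the integrand smooth, and property (a) makes $\overline{f_\omega}$ a cocycle. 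The $n$-boundedness hypothesis, combined with the uniform bound on $\|\widetilde\omega\|$, then yields a uniform bound on $|\overline{f_\omega}(\sigma)|$ by exactly the estimate used in Equation (\ref{eqn:Jacobian-bound}), so $\overline{f_\omega}$ is a bounded cocycle.

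The next step is to identify $[\overline{f_\omega}] \in H^n(M;\mathbb{R})$ with the de Rham class of $\omega$. Since $X$ is contractible and $st_*$ is a chain map by property (a), it is chain homotopic to the identity on singular chains in $X$; by $\Gamma$-equivariance this descends to a chain homotopy on singular chains in $M$. Hence $\overline{f_\omega}$ represents the same class as the ordinary de Rham cocycle $\sigma \mapsto \int_\sigma \omega$, and in particular
\[
\langle [\overline{f_\omega}], [M]\rangle = \int_M \omega = \mathrm{vol}(M) > 0.
\]
The proof is then completed by the standard duality inequality $\|M\| \geq |\langle [\phi],[M]\rangle|/\|\phi\|_\infty$ applied to $\phi = \overline{f_\omega}$.

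The main obstacle I anticipate is the class-identification step: making precise that the $\Gamma$-equivariant straightening descends to a well-defined straightening on singular chains of $M$ whose induced chain map is chain-homotopic to the identity. Once these chain-level facts are set up cleanly from property (a) and the contractibility of $X$, the boundedness estimate and the duality argument proceed in direct parallel with the proof of the \textbf{Main Theorem} in Section \ref{subsec:maintheorem}.
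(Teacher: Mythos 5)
Your route goes through bounded cohomology and Gromov's duality: you build the bounded cocycle $\overline{f_\omega}$, show it pairs to $\mathrm{vol}(M)$ with $[M]$, and invoke $\|M\| \geq |\langle[\overline{f_\omega}],[M]\rangle|/\|\overline{f_\omega}\|_\infty$. The proof in \cite{LS}, like the Gromov--Thurston argument it generalizes, is the dual and more direct version: for any real cycle $z=\sum a_i\sigma_i$ representing $[M]$, the straightened cycle $st_n(z)$ is $C^1$ and, by the chain homotopy you describe, still represents $[M]$, so
\[
\mathrm{vol}(M) \;=\; \int_{st_n(z)}\omega \;\leq\; \Big(\sum_i|a_i|\Big)\cdot C\cdot\|\omega\|_\infty\cdot\mathrm{vol}(\Delta^n),
\]
and taking the infimum over $z$ gives $\mathrm{vol}(M)\leq C'\|M\|$ with no explicit appeal to bounded cohomology. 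The two packagings encode the same estimate, but the direct one is cleaner precisely because it never needs $\overline{f_\omega}$ to be a cocycle.

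That cocycle claim is in fact the one point where your write-up is too quick. You assert that property (a) of Definition \ref{def:straightening} makes $\overline{f_\omega}$ a cocycle; unwinding, property (a) gives $d\overline{f_\omega}(\tau) = \int_{\partial\, st_{n+1}(\widetilde{\tau})}\widetilde{\omega}$, and to kill this by Stokes you would need $st_{n+1}(\widetilde{\tau})$ itself to be $C^1$, which Definition \ref{def:straightening} does not promise (property (b) applies only in degree $n$). One can still reach the conclusion: $\partial\, st_{n+1}(\widetilde{\tau})$ is a $C^1$ $n$-cycle in the contractible space $X$, hence bounds a $C^1$ $(n+1)$-chain (the $C^1$ singular complex computes ordinary homology), and Stokes for $C^1$ chains gives zero. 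This same input --- that the $C^1$ singular complex computes homology, together with Stokes for $C^1$ chains --- is also what makes $\int_{st_n(z)}\omega=\mathrm{vol}(M)$ rigorous, and is exactly the content of the ``class-identification step'' you flagged as the anticipated obstacle. Both points are genuine but standard; the cocycle step, however, needs to be credited to this $C^1$ machinery rather than to property (a) alone.
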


\begin{cor}\label{cor:no-straightening}
If $X$ splits off an isometric $\mathbb{R}$-factor, then $X$ does not admit an n-bounded, $G$-equivariant straightening.
\end{cor}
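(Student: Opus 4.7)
The plan is to argue by contradiction via Theorem \ref{thm:simplicial volume}: if an $n$-bounded, $G$-equivariant straightening of $X$ existed, it would in particular be $\Gamma$-equivariant for every torsion-free cocompact lattice $\Gamma \subset G$, and hence would force $\|X/\Gamma\| > 0$. To derive a contradiction it is enough to exhibit a single cocompact torsion-free lattice $\Gamma \subset G$ whose associated closed locally symmetric quotient has vanishing simplicial volume.

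Using the hypothesis $X = \mathbb{R} \times X'$, I would first invoke the de Rham decomposition theorem together with its compatibility with the isometry group to write $G = \mathbb{R} \times G'$ (after passing, if necessary, to a finite-index subgroup of $G$ that preserves the two de Rham factors individually), where the $\mathbb{R}$-factor is central in $G$ and $X' = G'/K$. By the Borel--Harish-Chandra construction, the semisimple group $G'$ admits a cocompact torsion-free lattice $\Gamma'$. Setting $\Gamma := \mathbb{Z} \times \Gamma' \subset \mathbb{R} \times G' = G$ then produces a cocompact torsion-free lattice in $G$ for which the corresponding closed locally symmetric space is a genuine metric product
$$M \;=\; X/\Gamma \;=\; S^1 \times M', \qquad M' := X'/\Gamma'.$$

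It remains to observe that $\|M\|=0$. This follows either from Gromov's multiplicative estimate $\|A\times B\| \leq C\,\|A\|\cdot\|B\|$ combined with $\|S^1\|=0$, or directly from Yano's theorem applied to the nontrivial free $S^1$-action on $M$ induced by the central $\mathbb{R}$-factor. Combined with Theorem \ref{thm:simplicial volume}, this yields the desired contradiction. The only genuinely nontrivial step is arranging the splitting $G = \mathbb{R} \times G'$ so that $\Gamma$ can be produced as a product: this is standard from the de Rham decomposition and the structure of the isometry group of a reducible symmetric space, but deserves to be stated carefully since it is the one place where the argument uses more than the cited theorem and a short computation with simplicial volume.
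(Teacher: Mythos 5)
Your argument is correct and follows essentially the same path as the paper: both proofs produce a closed manifold $M \cong M_0 \times S^1$ covered by $X$, apply Gromov's product inequality to conclude $\|M\| = 0$, and contradict Theorem~\ref{thm:simplicial volume}. The only difference is that you spell out the construction of the lattice $\Gamma$ (via the splitting $G = \mathbb{R}\times G'$ and Borel's existence theorem for cocompact lattices), whereas the paper simply posits the existence of the closed quotient $M_0$ — a helpful bit of bookkeeping, but not a different argument.
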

\begin{proof}
Let $X\simeq X_0\times \mathbb{R}$ for some symmetric space $X_0$. If $X$ admits an $n$-bounded,
$G$-equivariant straightening, then consider a closed manifold $M \simeq M_0\times S^1$, where
$\widetilde{M_0}\simeq X_0$. According to Theorem \ref{thm:simplicial volume}, the simplicial volume $||M||$ is positive.
But on the other hand $||M||=||M_0\times S^1||\leq C\cdot||M_0||\cdot||S^1||=0$. This contradiction completes the proof.
\end{proof}

We will use subspaces satisfying Corollary \ref{cor:no-straightening} to obstruct bounded straightenings.

\begin{defn}
For $X$ a symmetric space of non-compact type, we define the {\em splitting rank} of $X$, denoted $\text{srk}(X)$,
to be the maximal dimension of a totally geodesic submanifold $Y\subset X$ which splits off an isometric
$\mathbb{R}$-factor.
\end{defn}

For the irreducible symmetric spaces of non-compact type, computations of the splitting rank can be found
in a recent paper by the second author \cite{Wang} (see also Berndt and Olmos \cite{BO} for 
some related work).

\begin{thm}\label{thm:obstruction}
If $k=\text{srk}(X)$, then $X$ does not admit any $k$-bounded, $G$-equivariant straightening.
\end{thm}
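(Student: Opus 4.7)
The plan is to suppose for contradiction that $X$ admits a $k$-bounded, $G$-equivariant straightening $st_\bullet$, and to derive a contradiction by restricting this straightening to a totally geodesic submanifold $Y\subset X$ realizing the splitting rank, so that $\dim Y=k$ and $Y\simeq Y_0\times\mathbb{R}$ for some symmetric space $Y_0$. The idea is then to apply Theorem \ref{thm:simplicial volume} to a compact quotient of $Y$ of the form $(Y_0/\Gamma_0)\times S^1$, exactly as in the proof of Corollary \ref{cor:no-straightening}.

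Since $X$ is CAT(0) and $Y$ is complete totally geodesic (hence convex in $X$), the nearest-point projection $\pi_Y\colon X\to Y$ is well-defined, smooth, and $1$-Lipschitz, so $\|d\pi_Y\|\le 1$ everywhere. I would define a candidate restricted straightening on $Y$ by
\[
st_j^Y(f) \;:=\; \pi_Y\circ st_j(i\circ f), \qquad f\in C^0(\Delta^j,Y),
\]
where $i\colon Y\hookrightarrow X$ is the inclusion. The chain-map property of $st_\bullet^Y$ is inherited from $st_\bullet$ via $\pi_Y\circ i=\mathrm{id}_Y$; $C^1$-smoothness of $st_k^Y$ follows from smoothness of $\pi_Y$ and of $st_k$; and $k$-boundedness follows from the Jacobian estimate
\[
\abs{Jac(st_k^Y(f))(\delta)} \;\le\; \|d\pi_Y\|^k\cdot \abs{Jac(st_k(i\circ f))(\delta)} \;\le\; C,
\]
using that $st_\bullet$ is $k$-bounded on $X$.

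For equivariance, any $g\in G$ stabilizing $Y$ commutes with $\pi_Y$ (nearest-point projection to an isometrically-embedded convex set is an isometric invariant), so $G$-equivariance of $st_\bullet$ passes to $\text{Stab}_G(Y)$-equivariance of $st_\bullet^Y$. The hard part is to produce a cocompact lattice $\Gamma_Y\le\text{Stab}_G(Y)$ whose action on $Y$ takes the form $\Gamma_0\times\mathbb{Z}$, with $\Gamma_0$ a cocompact torsion-free lattice in $\text{Isom}_0(Y_0)$. The $\mathbb{Z}$-factor arises by choosing a nontrivial element in the one-parameter subgroup of transvections of $G$ along the geodesic in $X$ corresponding to the $\mathbb{R}$-factor of $Y$; and for the splitting-rank submanifolds classified in \cite[Table 3]{BO}, one expects---from their Lie-theoretic origin as fixed-point sets of involutions on Lie subalgebras---that $\text{Isom}_0(Y_0)$ embeds into $G$ as a subgroup of $\text{Stab}_G(Y)$, and then Borel's theorem supplies the factor $\Gamma_0$.

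Granted such a $\Gamma_Y$, the closed $k$-manifold $N:=Y/\Gamma_Y\simeq(Y_0/\Gamma_0)\times S^1$ has universal cover $Y$ and inherits a $k$-bounded, $\pi_1(N)$-equivariant straightening from $st_\bullet^Y$. Theorem \ref{thm:simplicial volume} then forces $\|N\|>0$, whereas the standard product inequality $\|M_1\times M_2\|\le \binom{\dim M_1+\dim M_2}{\dim M_1}\,\|M_1\|\cdot\|M_2\|$ combined with $\|S^1\|=0$ gives $\|N\|=0$, producing the desired contradiction.
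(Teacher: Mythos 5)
Your proposal is correct and follows the paper's strategy exactly: restrict the straightening to the splitting-rank submanifold $Y \simeq Y_0 \times \mathbb{R}$ via the ($1$-Lipschitz) nearest-point projection, check that $\pi_Y\circ st_*\circ i$ remains a $k$-bounded equivariant straightening on $Y$, and then contradict the simplicial-volume lower bound of Theorem~\ref{thm:simplicial volume} using a compact quotient $(Y_0/\Gamma_0)\times S^1$. The only difference is presentational -- the paper cites Corollary~\ref{cor:no-straightening} as a black box for the final step, whereas you unwind it and flag the one nontrivial ingredient, namely producing a cocompact torsion-free lattice of product form $\Gamma_0\times\mathbb{Z}$ inside $\text{Stab}_G(Y)$, which the paper's corollary likewise leaves implicit.
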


\begin{proof}
We show this by contradiction. Assume $X=G/K$ admits a $k$-bounded, $G$-equivariant straightening $st_i$, and
let $Y\subset X$ be a $k$-dimensional totally geodesic subspace which splits isometrically as $Y^\prime \times \mathbb R$.
Denote by $p:X\rightarrow Y$ the orthogonal projection from $X$ to $Y$, and note that the composition $p\circ st_*$
is a straightening on $Y$, which we denote by $\overline{st_*}$. Notice $Y$ is also a symmetric space and can be
identified with $G_0/K_0$, for some $G_0<G$, and $K_0<K$. Then the straightening $\overline{st_*}$ is certainly
$G_0$-equivariant. We claim it is also $k$-bounded. This is because the projection map $p$ is volume-decreasing, hence
$$\abs{Jac(\overline{st_k}(f))}=\abs{Jac\Big(p\big(st_k(f)\big)\Big)}\leq \abs{Jac\big(st_k(f)\big)}\leq C$$
for any $f\in C^0(\Delta^k, X)$. 
Therefore, we conclude that $Y$ admits a $G_0$-equivariant, $k$-bounded straightening. This contradicts 
Corollary \ref{cor:no-straightening}.
\end{proof}

\begin{remark}
In view of Proposition \ref{prop:reduction} and the arguments in Section \ref{subsec:maintheorem}, we
can view Theorem \ref{thm:obstruction} as {\em obstructing} the bounded ratio Theorem \ref{thm:bound-ratio}.
Specifically, if $k=\text{srk}(X)$, then Theorem \ref{thm:obstruction} tells us that one has a sequence
$f_i: \Delta ^k_s \rightarrow X$ with the property that the Jacobian of $st_k(f_i)$ is unbounded. From the definition
of our straightening maps $st_k$, this means one has a sequence $V_i=\left \{v_0^{(i)}, \ldots v_k^{(i)}\right \}\subset X$
of $(k+1)$-tuples of points (the vertices of the singular simplices $f_i$), and a sequence of points
$\delta_i =\left (a_0^{(i)}, \ldots, a_k^{(i)}\right )$ inside the spherical simplex
$\Delta^k_s \subset \mathbb R ^{k+1}$, satisfying
the following property. If one looks at the corresponding sequence of points
$$p_i:= \left(st_k(f_i)\right)(\delta_i) = Bar\left(\sum_{j=0}^k a_j^{(i)}\mu\Big(v_j^{(i)}\Big)\right),$$
one has a sequence of $k$-dimensional subspace $S_i \subset T_{p_i}X$ (given by the tangent spaces
$D(st_{V_i})(T_{\delta_i}\Delta^k_s)$ to the straightened simplex $st_k(f_i)$ at the point $p_i$), and the
sequence of ratios $\det(Q_1|_{S_i})^{1/2}/\det(Q_2|_{S_i})$ tends to infinity. It is not too hard to see that, for each
dimension $k' \leq k$, one can find a $k'$-dimensional subspace $\bar S_i \subset S_i$ such that the sequence
of ratios of determinants, for the quadratic forms restricted to the $\bar S_i$, must also tend to infinity. Thus the
bounded ratio Theorem \ref{thm:bound-ratio} fails whenever $k^\prime \leq \text{srk}(X)$.
\end{remark}

\subsection{The case of $SL(m, \mathbb R)$}\label{subsec:example}
We conclude our paper with a detailed discussion of the special case of the Lie group $G=SL(m, \mathbb R)$, $m\geq 5$.
The continuous cohomology has been computed (see e.g. \cite[pg. 299]{Fu}) and can be described as follows. If $m=2k$
is even, then $H^*_c\left (SL(2k, \mathbb R)\right )$ is an exterior algebra in $k$ generators in degrees
$5, 9, \ldots  , 4k-3, 2k$. If $m=2k+1$ is even, then $H^*_c\left (SL(2k+1, \mathbb R)\right )$ is an exterior algebra in
$k$ generators in degrees $5, 9, \ldots  , 4k+1$.

The associated symmetric space is $X= SL(m, \mathbb R) /SO(m)$, and we have that
$$n=\dim (X) = \dim \left (SL(m, \mathbb R)\right ) - \dim \left (SO(m)\right ) = (m^2-1) - \frac{1}{2}m(m-1)
= {m+1 \choose 2} -1,$$
while the rank of the symmetric space is clearly $r=m-1$. Thus, our {\bf Main Theorem} tells us that, for these Lie
groups, the comparison map
$$H^*_{c,b}(SL(m, \mathbb R)) \rightarrow H^*_{c}(SL(m, \mathbb R))$$
is surjective within the range of degrees $*\geq {m+1\choose 2} - m + 2$.

Observe that the exterior product of all the generators $H^*_c(SL(m,\mathbb R))$ yields the generator for the
top-dimensional cohomology, which lies in degree ${m+1 \choose 2} - 1$. Dropping off the $5$-dimensional
generator in the exterior product yields a non-trivial class in degree ${m+1\choose 2} - 6$. Comparing with the
surjectivity range in our {\bf Main Theorem}, we see that the first interesting example occurs in the case of
$SL(8, \mathbb R)$, where our results imply that $H_{c,b}^{30}\Big(SL(8, \mathbb R)\Big)\neq 0$ (as well as
$H_{c,b}^{35}\Big(SL(8, \mathbb R)\Big)\neq 0$, which was previously known). Of course,
as $m$ increases, our method provides more and more non-trivial bounded cohomology classes. For example,
once we reach $SL(12, \mathbb R)$, we get new non-trivial bounded cohomology classes in
$H_{c,b}^{68}\Big(SL(12, \mathbb R)\Big)$ and $H_{c,b}^{72}\Big(SL(12, \mathbb R)\Big)$.

Finally, let us consider Theorem \ref{thm:obstruction} in the special case of $X=SL(m,\mathbb R)/SO(m)$.
Choose a maximally singular direction in the symmetric space $X$, and let $X_0$
be the set of geodesics that are parallel to that direction. Without loss of generality, we can take $X_0=G_0/K_0$, where
$$G_0=\Bigg\{
         \begin{bmatrix}
           A & 0 \\
           0 & a \\
         \end{bmatrix}
\hskip 5pt | \hskip 5pt \det(A)\cdot a=1,\hskip 2pt a>0\Bigg\}$$
and $K_0=\SO(m)\cap G_0$. Moreover, $X_0$ clearly splits off an isometric $\mathbb{R}$-factor, and can be
isometrically identified with $\SL(m-1,\mathbb{R})/\SO(m-1)\times \mathbb{R}$. This is the maximal dimensional subspace
of $SL(m, \mathbb R)$ that splits off an isometric $\mathbb R$-factor (see \cite[Table 3]{BO}), and the splitting rank is just
$\dim(X_0) = {m \choose 2}$. So in this special case, Theorem
\ref{thm:obstruction} tells us that our method for obtaining bounded cohomology classes {\it fails} once we reach
degrees $\leq {m \choose 2}$. Comparing this to the range where our method
works, we see that, in the special case where $G=SL(m,\mathbb R)$, the only degree which remains unclear
is ${m\choose 2} + 1$. This example shows our {\bf Main Theorem} is very close to the optimal possible.


\section{Concluding remarks}

As we have seen, the technique used in our {\bf Main Theorem} seems close to optimal, at least when restricted
to the Lie groups $SL(m, \mathbb R)$. Nevertheless, the authors believe that for other families of symmetric spaces,
there are likely to be improvements on the range of dimensions in which a barycentric straightening is bounded.

We also note that it might still be possible to bypass the limitations provided by the splitting rank. Indeed, the splitting
rank arguments show that the barycentric straightening is not $k$-bounded, when $k= \text{srk}(X)$. But the barycentric
straightening might still be $k'$-bounded for some $k'<\text{srk}(X)$ (even though the bounded Jacobian Theorem
\ref{thm:bound-ratio} must fail for $k'$-dimensional subspaces).


\begin{thebibliography}{10}

\bibliographystyle{plain}

\bibitem{Al}
P.~Albuquerque.
\newblock Patterson-{S}ullivan theory in higher rank symmetric spaces.
\newblock {\em Geom. Funct. Anal.}, 9(1):1--28, 1999.

\bibitem{BO}
J.~Berndt and C.~Olmos.
\newblock Maximal totally geodesic submanifolds and index of symmetric spaces.
\newblock {\em J. Diff. Geom.}, 104(2):187--217, 2016.

\bibitem{BCG}
G.~Besson, G.~Courtois, and S.~Gallot.
\newblock Minimal entropy and {M}ostow's rigidity theorems.
\newblock {\em Ergodic Theory Dynam. Systems}, 16(4):623--649, 1996.

\bibitem{BIMW}
M.~Burger, A.~Iozzi, N.~Monod, and A.~Wienhard.
\newblock Bounds for cohomology classes.
\newblock {\em Enseign. Math.}, 54:52--54, 2008.

\bibitem{CO}
J.-L. Clerc and B.~{\O}rsted.
\newblock The {G}romov norm of the {K}aehler class and the {M}aslov index.
\newblock {\em Asian J. Math.}, 7(2):269--295, 2003.

\bibitem{CF1}
C.~Connell and B.~Farb.
\newblock The degree theorem in higher rank.
\newblock {\em J. Differential Geom.}, 65(1):19--59, 2003.

\bibitem{CF2}
C.~Connell and B.~Farb.
\newblock Erratum for ``{T}he degree theorem in higher rank''.
\newblock {\em J. Differential Geom.}, 105(1):21--32, 2017.

\bibitem{DT}
A.~Domic and D.~Toledo.
\newblock The {G}romov norm of the {K}aehler class of symmetric domains.
\newblock {\em Math. Ann.}, 276(3):425--432, 1987.

\bibitem{Du}
J.~L. Dupont.
\newblock Simplicial de {R}ham cohomology and characteristic classes of flat
bundles.
\newblock {\em Topology}, 15(3):233--245, 1976.

\bibitem{Du2}
J.~L. Dupont.
\newblock Bounds for characteristic numbers of flat bundles.
\newblock In {\em Algebraic topology, {A}arhus 1978 ({P}roc. {S}ympos., {U}niv.
	{A}arhus, {A}arhus, 1978)}, volume 763 of {\em Lecture Notes in Math.}, pages
109--119. Springer, Berlin, 1979.

\bibitem{Eb}
P.~B. Eberlein.
\newblock {\em Geometry of nonpositively curved manifolds}.
\newblock Chicago Lectures in Mathematics. University of Chicago Press,
Chicago, IL, 1996.

\bibitem{Fu}
D.~B. Fuks.
\newblock {\em Cohomology of Infinite-Dimensional Lie Algebras}.
\newblock Springer-Verlag, 1986.

\bibitem{Gr}
M.~Gromov.
\newblock Volume and bounded cohomology.
\newblock {\em Inst. Hautes \'Etudes Sci. Publ. Math.}, 56:5--99, 1982.

\bibitem{HO}
T.~Hartnick and A.~Ott.
\newblock Surjectivity of the comparison map in bounded cohomology for
{H}ermitian {L}ie groups.
\newblock {\em Int. Math. Res. Not. IMRN}, 9:2068--2093, 2012.

\bibitem{KK}
I.~Kim and S.~Kim.
\newblock Simplicial volume, barycenter method, and bounded cohomology.
\newblock {\em http://arxiv.org/abs/1503.02381, preprint}, 2015.

\bibitem{LS}
J.-F. Lafont and B.~Schmidt.
\newblock Simplicial volume of closed locally symmetric spaces of non-compact
type.
\newblock {\em Acta Math.}, 197(1):129--143, 2006.

\bibitem{Mo}
N.~Monod.
\newblock An invitation to bounded cohomology.
\newblock In {\em International {C}ongress of {M}athematicians. {V}ol. {II}},
pages 1183--1211. Eur. Math. Soc., Z\"urich, 2006.

\bibitem{Ve}
W.~T. van Est.
\newblock On the algebraic cohomology concepts in {L}ie groups. {I}, {II}.
\newblock {\em Nederl. Akad. Wetensch. Proc. Ser. A. {\bf 58} = Indag. Math.},
17:225--233, 286--294, 1955.

\bibitem{Wang}
S.~Wang.
\newblock On splitting rank of non-compact type symmetric spaces and bounded
cohomology.
\newblock {\em http://arxiv.org/abs/1602.01495, preprint to appear in J. Topol.
	Anal.}, 2016.

\end{thebibliography}

\end{document}